\definecolor{ANDREW}{RGB}{255,127,0}
\theoremstyle{plain}
\newtheorem{proposition}{Proposition}[section]
\newtheorem{theorem}[proposition]{Theorem}
\newtheorem{lemma}[proposition]{Lemma}
\newtheorem{corollary}[proposition]{Corollary}
\theoremstyle{definition}
\newtheorem{observation}[proposition]{Observation}
\theoremstyle{remark}
\newtheorem{remark}[proposition]{Remark}
\newtheorem{conjecture}[proposition]{Conjecture}
\DeclareMathOperator{\Aut}{Aut}
\DeclareMathOperator{\Vol}{Vol}
\DeclareMathOperator{\Euc}{Euc}
\DeclareMathOperator{\Cc}{\mathcal{C}}
\DeclareMathOperator{\Lc}{\mathcal{L}}
\DeclareMathOperator{\Oc}{\mathcal{O}}
\DeclareMathOperator{\Tc}{\mathcal{T}}
\DeclareMathOperator{\Bb}{\mathbb{B}}
\DeclareMathOperator{\Cb}{\mathbb{C}}
\DeclareMathOperator{\Nb}{\mathbb{N}}
\DeclareMathOperator{\Rb}{\mathbb{R}}
\DeclareMathOperator{\Zb}{\mathbb{Z}}
\newcommand{\abs}[1]{\left|#1\right|}
\newcommand{\norm}[1]{\left\|#1\right\|}
\newcommand{\wt}[1]{\widetilde{#1}}
\begin{document}

\title{Smoothly bounded domains covering finite volume manifolds}
\author{Andrew Zimmer}\address{Department of Mathematics, College of William and Mary.}
\email{amzimmer@wm.edu}
\date{\today}
\keywords{}
\subjclass[2010]{}

\begin{abstract} In this paper we prove: if a bounded domain with $C^2$ boundary  covers a manifold which has finite volume with respect to either the Bergman volume, the K{\"a}hler-Einstein volume, or the Kobayashi-Eisenman volume, then the domain is biholomorphic to the unit ball. This answers an old question of Yau. Further, when the domain is convex we can assume that the boundary only has $C^{1,\epsilon}$ regularity. 
\end{abstract}

\maketitle

\section{Introduction}

Given a domain $\Omega \subset \Cb^d$ let $\Aut(\Omega)$ denote the biholomorphism group of $\Omega$. When $\Omega$ is bounded, H. Cartan proved that $\Aut(\Omega)$ is a Lie group (with possibly infinitely many connected components) and acts properly on $\Omega$. 

An old theorem of Wong-Rosay~\cite{W1977, R1979} states that if $\Omega \subset \Cb^d$ is a bounded domain with $C^2$ boundary and $\Aut(\Omega)$ acts co-compactly on $\Omega$, then $\Omega$ is biholomorphic to the unit ball. According to Wong~\cite[p. 257]{W1977}, Yau suggested that the co-compactness condition could be replaced by the assumption that $\Omega$ covers a finite volume manifold. More precisely:

\begin{conjecture}[Yau] 
Let $\Omega \subset \Cb^d$ ($d \geq 2$) be a bounded pseudoconvex domain whose boundary is $C^2$. Assume that $\Omega$ has a (open) quotient of finite-volume (in the sense of K{\"a}hler-Einstein volume). Then $\Omega$ is biholomorphic to the unit ball in $\Cb^d$.
\end{conjecture}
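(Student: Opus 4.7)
The plan is to reduce to the Wong-Rosay theorem by producing, from the finite-volume hypothesis, a deck-transformation sequence $\gamma_n \in \Gamma \leq \Aut(\Omega)$ and a base point $x_0 \in \Omega$ so that $\gamma_n(x_0)$ converges to a \emph{strongly pseudoconvex} boundary point of $\Omega$. Once this is done, Pinchuk rescaling at that point yields a biholomorphism onto $\Bb^d$.

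If $\Gamma$ acts cocompactly, Wong-Rosay applies directly, so I may assume $\Omega/\Gamma$ is non-compact. The K\"ahler-Einstein metric $g_{KE}$ on $\Omega$ (whose existence and completeness on bounded pseudoconvex domains is due to Cheng-Yau/Mok-Yau) is canonical, hence $\Aut(\Omega)$-invariant, and descends to a finite-volume metric on $\Omega/\Gamma$. Non-compactness of a finite-volume Riemannian manifold produces a sequence of points leaving every compact subset of the quotient; lifting to $\Omega$ and using its boundedness, after extracting a subsequence I get $x_0 \in \Omega$ and $\gamma_n \in \Gamma$ with $\gamma_n(x_0) \to p \in \partial\Omega$. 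The same conclusion is available for the Bergman and Kobayashi-Eisenman cases, since each of those volume forms is locally comparable to the Kobayashi-indicatrix volume on a bounded $C^2$ pseudoconvex domain.

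The main obstacle is showing that $p$ must be strongly pseudoconvex. I would argue by contradiction. At a $C^2$ weakly pseudoconvex boundary point, one has controllable non-isotropic upper asymptotics for $g_{KE}$, limiting how fast the volume form blows up. Meanwhile, orbit accumulation at $p$ forces infinitely many pairwise disjoint, $g_{KE}$-congruent translates $\gamma_n(U)$ of a small Kobayashi ball $U \ni x_0$ to cluster near $p$. Comparing the total volume contributed by these translates to an upper bound on the K\"ahler-Einstein volume of a suitable neighborhood of $p$ in $\Omega/\Gamma$ (itself governed by the boundary Levi geometry) should contradict finiteness of volume. Concretely this is a Margulis-type thick-thin decomposition adapted to the complex setting, likely requiring a local injectivity-radius bound for the $\Gamma$-action near $p$ together with sharp volume asymptotics that become \emph{too slow} a blow-up precisely at weakly pseudoconvex points; Gromov hyperbolicity of the Kobayashi metric near strongly pseudoconvex boundary points (Balogh-Bonk) is a natural tool for the injectivity-radius side.

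With $p$ identified as strongly pseudoconvex, I run Pinchuk's scaling: complex affine transformations translating $p$ to the origin and anisotropically dilating, composed with the $\gamma_n$, give a normal family of biholomorphic embeddings of $\Omega$ into $\Cb^d$. Boundary-localization estimates for the Kobayashi metric on $C^2$ domains, together with Cartan's theorem, force the limit to be a biholomorphism from $\Omega$ onto an unbounded realization of $\Bb^d$. The principal difficulty remains the middle step --- controlling orbit concentration uniformly at weakly pseudoconvex boundary points --- and packaging the resulting estimates so that they simultaneously handle the Bergman, K\"ahler-Einstein, and Kobayashi-Eisenman volumes. The cleanest route is presumably to express all three through Kobayashi-metric volume bounds valid up to $\partial\Omega$.
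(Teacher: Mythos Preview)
Your outline has a genuine gap in the central step, and the paper's argument takes a fundamentally different route.

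The volume-comparison contradiction you sketch does not work as stated. If $\gamma_n(x_0)\to p$, the translates $\gamma_n(U)$ of a small ball $U\ni x_0$ are congruent in $\Omega$, but they all project to the \emph{same} set $\pi(U)$ in $\Gamma\backslash\Omega$. They therefore contribute a single fixed amount to $\Vol(\Gamma\backslash\Omega)$, not a divergent sum, and no comparison with ``an upper bound on the K\"ahler--Einstein volume of a neighborhood of $p$ in $\Omega/\Gamma$'' is available: that neighborhood is simply $\pi(U)$ again. In short, orbit accumulation at a boundary point, by itself, says nothing about the quotient volume, so there is no mechanism here to rule out weakly pseudoconvex $p$. (Also, your first paragraph conflates two different things: a sequence escaping to infinity in the quotient lifts to points $x_n\to\partial\Omega$ that are \emph{not} in a single $\Gamma$-orbit; an orbit $\gamma_n(x_0)$ accumulating on $\partial\Omega$ exists simply because $\Gamma$ is infinite and acts properly, with no control on the limit.)

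The paper avoids this entirely by reversing the logic. Rather than taking an arbitrary limit point and proving it strongly pseudoconvex, it first \emph{fixes} a strongly pseudoconvex point $\xi\in\partial\Omega$ (such points always exist on a $C^2$ boundary) and then forces an orbit to accumulate at $\xi$. The key ingredients are: (i) a uniform lower bound on the squeezing function along the inward normal at $\xi$, giving uniform local control of all three volume forms there; (ii) a Gromov-product/visibility estimate for the Bergman distance near $\xi$ (the paper's Theorem~3.1), which says any Bergman geodesic from a point near $\xi$ to a point bounded away from $\xi$ passes within bounded distance of a fixed base point; and (iii) a case analysis on the injectivity radius $\delta_n=\min_{\gamma\neq 1}B_\Omega(y_n,\gamma y_n)$ along $y_n\to\xi$. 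If $\delta_n\not\to 0$, the volume lower bound from (i) plus finite volume forces $\pi(y_n)$ to stay in a compact set, yielding an orbit hitting $\xi$. If $\delta_n\to 0$, the visibility estimate (ii) and a Busemann-function argument force the nearly-fixing elements $\gamma_n$ (or their powers) to push a base point toward $\xi$; a residual finite-order subcase is eliminated via a local Cartan fixed-point argument, contradicting freeness. So the essential idea you are missing is that one can \emph{prescribe} the target boundary point in advance and use horosphere geometry at that point, rather than trying to analyze an uncontrolled limit point.
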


Considering bounded domains that cover finite volume open manifolds seems more natural than studying those that cover compact manifolds. For instance, it is well known that $\Tc_{g}$, the Teichm{\"u}ller space of hyperbolic surfaces with genus $g$, is biholomorphic to a bounded domain and has a finite volume quotient. Further, Griffiths constructed the following examples.

\begin{theorem}\cite[Theorem I, Proposition 8.12]{G1971} Suppose $V$ is an irreducible, smooth, quasi-projective algebraic variety over the complex numbers. For any $x \in V$ there exists a Zariski neighborhood $U$ of $x$ such that $\wt{U}$, the universal cover of $U$, is biholomorphic to a bounded pseudoconvex domain in $\Cb^d$. Moreover, the Kobayashi-Eisenman volume of $U$ is finite. 
\end{theorem}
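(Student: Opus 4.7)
The plan is to use Hironaka resolution of singularities to construct a good projective compactification of $V$, enlarge the boundary by further hypersurfaces so that locally $d$ boundary components meet at every point, and then exploit the resulting product structure to verify both conclusions.

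First, using Hironaka I would choose a smooth projective compactification $\bar V \supset V$ such that $D = \bar V \setminus V$ is a simple normal crossings divisor. Near any boundary point, $(\bar V, D)$ is then analytically isomorphic to $(\Delta^d, \{z_1 \cdots z_k = 0\})$ for some $0 \le k \le d$, so $V$ is locally $(\Delta^*)^k \times \Delta^{d-k}$. Second, fix a very ample line bundle $L$ on $\bar V$ and, for $m$ large, pick sections of $L^{\otimes m}$ (none vanishing at $x$) whose zero divisors, added to $D$, produce a simple normal crossings divisor $D' \supset D$ with the property that exactly $d$ components of $D'$ meet at every point of $\bar V$; this is a Bertini-type argument, adding sections one at a time and using that the bad positions form a proper subvariety. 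The Zariski open set $U = \bar V \setminus D'$ then contains $x$ and is locally analytically isomorphic to $(\Delta^*)^d$ at every boundary point.

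With this log-maximal structure, finiteness of the Kobayashi--Eisenman volume is a purely local computation: the Kobayashi--Eisenman volume form on $(\Delta^*)^d$ is the product Poincar\'e volume form, whose density near the origin behaves like $\prod_{i=1}^d |z_i|^{-2} (\log |z_i|)^{-2}$, and this is integrable because $\int_0^{1/2} r^{-1}(\log r)^{-2}\, dr$ converges. Covering $\bar U$ by finitely many such charts then gives finite total volume for $U$.

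The main obstacle is producing a bounded pseudoconvex realization of $\wt U$ inside $\Cb^d$. My approach would be to use the standard universal cover $\Hb \to \Delta^*$, $z \mapsto \exp(2\pi i z)$ (with $\Hb \cong \Db$): this shows that $U$ is locally uniformized by a polydisk at each boundary point, hence is Kobayashi hyperbolic with complete Kobayashi metric, and moreover $U$ is Stein as the complement of an ample divisor in $\bar V$. It follows that $\wt U$ is Stein and Kobayashi hyperbolic. To realize $\wt U$ as a bounded domain in $\Cb^d$, I would glue the local $\Db^d$ uniformizations of neighborhoods of the boundary strata along a fundamental domain, using the fact that the local monodromy around each irreducible component of $D'$ acts by diagonal multiplication on the corresponding polydisk coordinate. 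Pseudoconvexity is then automatic from Steinness. The delicate step --- where Griffiths's original argument does real work --- is checking that these local polydisk uniformizations glue coherently across strata where different components of $D'$ intersect, producing a single biholomorphism from $\wt U$ onto a bounded domain in $\Cb^d$.
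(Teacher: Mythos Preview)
The paper does not contain a proof of this statement: it is quoted verbatim as a result of Griffiths, with the citation \cite[Theorem I, Proposition 8.12]{G1971}, and is used only as motivation in the introduction. There is therefore no ``paper's own proof'' to compare your attempt against.

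That said, your outline is broadly in the spirit of Griffiths's original argument: compactify by Hironaka, throw in enough ample hypersurface sections (avoiding $x$) so that the complement $U$ has a simple normal crossings boundary with exactly $d$ components through every boundary point, and then use that $U$ is locally $(\Delta^*)^d$ near the boundary. Your volume computation is the right one. Where your sketch is honestly incomplete is exactly where you flag it: passing from ``locally uniformized by a polydisk'' to ``$\wt U$ is globally biholomorphic to a bounded domain in $\Cb^d$'' is the substantive content of Griffiths's Theorem~I, and it does not follow just from $\wt U$ being Stein and Kobayashi hyperbolic (there are such manifolds not embeddable as bounded domains). Griffiths constructs an explicit period-type map using the monodromy representation around the boundary components, and verifying that it is a biholomorphism onto a bounded domain requires the specific structure coming from the $(\Delta^*)^d$ local models and a Schottky-type gluing; your ``glue the local polydisk uniformizations'' is a placeholder for that argument rather than a proof. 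If you want to fill this in, you should consult Sections~7--8 of \cite{G1971} directly.
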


In this paper we answer Yau's question:

\begin{theorem}\label{thm:main} Suppose $\Omega \subset \Cb^d$ is a bounded pseudoconvex domain with $C^2$ boundary and $\Gamma \leq \Aut(\Omega)$ is a discrete group acting freely on $\Omega$. If $\Gamma \backslash \Omega$ has finite volume with respect to either the Bergman volume, the K{\"a}hler-Einstein volume, or the Kobayashi-Eisenman volume, then $\Omega$ is biholomorphic to the unit ball. 
\end{theorem}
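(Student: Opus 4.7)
The plan is to reduce the problem to the setting of the Wong--Rosay theorem by combining Pinchuk's scaling method with a finite-volume recurrence argument. The goal is to exhibit a basepoint $p_0 \in \Omega$ and a sequence $\gamma_n \in \Gamma$ such that $\gamma_n(p_0) \to q \in \partial\Omega$ at a strictly pseudoconvex point; given this, Pinchuk dilations $T_n$ centered at $q$ give $T_n(\Omega) \to \Bb^d$ locally uniformly, and the maps $T_n \circ \gamma_n : \Omega \to T_n(\Omega)$ form a normal family whose subsequential limit is a biholomorphism onto $\Bb^d$ (the inverse is obtained by the analogous normal-families argument on $(T_n \circ \gamma_n)^{-1}$).

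First I would observe that the set $\Sc \subset \partial\Omega$ of strictly pseudoconvex points is a non-empty relatively open subset, via the ``largest inscribed ball'' argument: for any $p \in \Omega$, any $q \in \partial\Omega$ minimizing $\norm{z - p}$ lies on the osculating inward sphere $\partial B(p, \norm{p-q})$, which forces the Levi form at $q$ to be positive definite.

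The core of the argument is a volume estimate producing recurrence near $\Sc$. For each of the three volumes, the standard boundary asymptotics at strictly pseudoconvex points (Fefferman for Bergman, Cheng--Yau for K\"ahler--Einstein, and the comparison of the Kobayashi metric with the complex hyperbolic metric of $\Bb^d$ for Kobayashi--Eisenman) show that the volume density on $\Omega$ blows up at least like $\delta(z)^{-(d+1)}$ near $\Sc$, where $\delta(z)$ denotes the Euclidean distance to $\partial\Omega$. In particular $\Vol(U \cap \Omega) = +\infty$ for any Euclidean neighborhood $U$ of a point $q_0 \in \Sc$. Combined with $\Vol(\Gamma\backslash\Omega) < \infty$ and freeness of the action, the projection $U \cap \Omega \to \Gamma\backslash\Omega$ must have essentially unbounded multiplicity. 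Taking a shrinking sequence $U_m \downarrow \{q_0\}$ produces points $p_m \in U_m \cap \Omega$ and non-trivial $\gamma_m \in \Gamma$ with $\gamma_m(p_m) \in U_m \cap \Omega$, so that both $p_m$ and $\gamma_m(p_m)$ Euclidean-converge to $q_0 \in \Sc$.

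The main obstacle will be converting this recurrence at \emph{moving} basepoints $p_m$ into honest orbital accumulation at a \emph{fixed} basepoint. Since $p_m$ escapes to the boundary in the invariant distance, one has $d_\Omega(p_0, p_m) \to \infty$ for any fixed $p_0$, and therefore $\gamma_m(p_0)$ can a priori drift far from $q_0$. To circumvent this I would apply Pinchuk scaling centered at the moving points $p_m$ themselves: rescalings $T_m$ can be chosen so that $T_m(\Omega) \to \Bb^d$ and $T_m(p_m)$ remains in a fixed compact subset of $\Bb^d$. A finite-volume quotient of a complete invariant metric has injectivity radius tending to zero along any boundary-escaping sequence, which forces $d_\Omega(p_m, \gamma_m(p_m)) \to 0$ and hence confines $T_m(\gamma_m(p_m))$ to a fixed compact set of $\Bb^d$ as well. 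A normal families extraction on $T_m \circ \gamma_m$ and its inverse then produces the desired biholomorphism $\Omega \to \Bb^d$ directly, without needing orbital accumulation at a fixed basepoint. The most delicate piece is verifying the injectivity-radius decay simultaneously for all three invariant volumes, which requires comparison estimates between the Bergman, K\"ahler--Einstein, and Kobayashi metrics near $\Sc$.
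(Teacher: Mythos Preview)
Your outline correctly identifies the main obstacle---converting recurrence at moving basepoints $p_m$ into orbital accumulation at a fixed basepoint---but the proposed fix via scaling at the moving $p_m$ does not close the gap. The maps $F_m = T_m \circ \gamma_m : \Omega \to T_m(\Omega)$ do form a normal family, but for the limit to be non-degenerate you need some \emph{fixed} $z_0 \in \Omega$ with $F_m(z_0)$ remaining in a compact subset of $\Bb^d$; the only compactness you have is for $F_m(p_m)=T_m(\gamma_m p_m)$, and $p_m$ is moving. Conjugating instead, $T_m \circ \gamma_m \circ T_m^{-1}$ sends $T_m(p_m)$ to $T_m(\gamma_m p_m)$, and since $d_\Omega(p_m,\gamma_m p_m)\to 0$ its normal-families limit is the identity of $\Bb^d$, which tells you nothing about $\Omega$. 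In short, scaling limits at moving basepoints detect that the local model at $q_0$ is the ball (which you already knew from strong pseudoconvexity), not that $\Omega$ is globally biholomorphic to it; you are forced back to the Wong--Rosay hypothesis. (A smaller point: the injectivity-radius claim needs the sequence to escape in the \emph{quotient}, not merely in $\Omega$, and it also needs a uniform lower volume bound for small metric balls, which is not automatic for a general invariant metric; in the paper this is supplied by the squeezing-function estimates of Theorem~\ref{thm:squeeze}.)

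The paper's proof supplies exactly the missing ingredient: a Gromov-product estimate for the Bergman distance (Theorem~\ref{thm:GP}) showing that any point near the strongly pseudoconvex point $\xi$ and any point bounded away from $\xi$ are joined by a quasi-geodesic through a fixed basepoint $z_0$. This is used to force genuine orbital accumulation at $\xi$. After splitting into the cases $\delta_n \not\to 0$ (your recurrence argument, done via Theorem~\ref{thm:squeeze}) and $\delta_n \to 0$, the paper argues: if the short elements $\gamma_n$ range over infinitely many group elements, then $\gamma_n^{-1}z_0 \to \xi$, since any other subsequential limit $\eta \neq \xi$ would force $B_\Omega(z_0,\gamma_n^{-1}z_0)$ to stay bounded via Theorem~\ref{thm:GP}, contradicting discreteness. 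If instead $\gamma_n \equiv \gamma$, one passes to a horofunction $b$ based at $\xi$, shows $b$ is $\gamma$-invariant, and uses Theorem~\ref{thm:GP} again to see that sublevel sets of $b$ can only accumulate on $\partial\Omega$ at $\xi$; hence $\gamma^{-n}z_0 \to \xi$ unless $\gamma$ has finite order, in which case a local Cartan fixed-point argument (Proposition~\ref{prop:COM}, using negative Bergman curvature near $\xi$) produces a fixed point of $\gamma$ and contradicts freeness.
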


\begin{remark} Recently, Liu and Wu~\cite{LW2018}, see Theorem~\ref{thm:LW} below, established the above theorem with the additional assumptions that \begin{enumerate}
\item $d=2$ and $\Omega$ is convex, or
\item $d>2$,  $\Omega$ is convex, and $\Gamma$ is irreducible. 
\end{enumerate}
\end{remark}

It is well known that Teichm{\"u}ller spaces admit a finite volume quotient and so Theorem~\ref{thm:main} provides a new proof of the following result. 

\begin{corollary}[{Yau~\cite[p. 328]{Y2011}}]\label{cor:teich}
Let $\Tc_{g}$ denote the Teichm{\"u}ller space of hyperbolic surfaces with genus $g$. If $g \geq 2$, then $\Tc_{g}$ is not biholomorphic to a bounded domain with $C^2$ boundary. 
\end{corollary}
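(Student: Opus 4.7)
The plan is to derive a contradiction from Theorem~\ref{thm:main}. Suppose, toward contradiction, that $\Tc_g$ is biholomorphic to a bounded domain $\Omega \subset \Cb^{3g-3}$ with $C^2$ boundary. Since $\Tc_g$ is Stein (for instance, via the Bers embedding), $\Omega$ is Stein; for a bounded domain in $\Cb^d$ with $C^2$ boundary, Steinness is equivalent to pseudoconvexity, so $\Omega$ is automatically pseudoconvex and the hypotheses of Theorem~\ref{thm:main} are in force.

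Next I would pass to a torsion-free, finite-index subgroup $\Gamma$ of the mapping class group of a genus-$g$ surface --- for example, a level-$3$ congruence subgroup obtained from the action on $H_1$. Transported through the biholomorphism, $\Gamma$ acts freely and properly discontinuously on $\Omega$ by biholomorphisms, and the quotient $M := \Gamma\backslash\Omega$ is a smooth quasi-projective complex variety (a moduli space of curves with level structure). By the theorem of Griffiths quoted in the introduction, each point of $M$ has a Zariski-open neighborhood with finite Kobayashi--Eisenman volume; quasi-projectivity gives a finite subcover, so $M$ itself has finite Kobayashi--Eisenman volume.

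Theorem~\ref{thm:main} then forces $\Omega$, and hence $\Tc_g$, to be biholomorphic to the unit ball $\Bb^{3g-3}$. The contradiction comes from automorphism groups: by Royden's theorem, $\Aut(\Tc_g)$ is (up to a finite index-two extension) the mapping class group, which is countable and discrete. On the other hand $\Aut(\Bb^{3g-3})$ is a real Lie group of positive dimension acting transitively on $\Bb^{3g-3}$, which has positive complex dimension $3g-3 \geq 3$; so no such biholomorphism can exist.

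The step I expect to be the main obstacle is certifying the finite-volume hypothesis in one of the three permitted senses. The Kobayashi--Eisenman route sketched above is the cleanest, since it just bolts a finite Zariski cover onto the Griffiths theorem already cited in the paper; as alternative backstops one could appeal to the standard finiteness of the K\"ahler--Einstein volume (via Cheng--Yau on the pseudoconvex model of $\Tc_g$, combined with biholomorphic invariance) or of the Bergman volume on a smooth finite cover of $\Mcc_g$. Everything else --- Steinness, existence of a torsion-free finite-index $\Gamma$, and the automorphism-group dichotomy --- is a standard and short invocation of classical facts.
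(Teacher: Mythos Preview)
Your proposal is correct and follows essentially the same route the paper indicates: the paper does not give a separate proof of this corollary but simply remarks that Teichm\"uller space is well known to admit a finite-volume quotient, so Theorem~\ref{thm:main} applies. Your argument is a faithful elaboration of that one-line justification---you supply the pseudoconvexity of $\Omega$, the torsion-free finite-index subgroup, the finite Kobayashi--Eisenman volume via quasi-projectivity and Griffiths (which is exactly how the paper derives Corollary~\ref{cor:quasi_proj}), and the final contradiction via Royden's theorem---all of which the paper leaves implicit.
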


\begin{remark} \ \begin{enumerate}
\item A theorem of Bers~\cite{B1960} says that $\Tc_{g}$ is biholomorphic to a bounded domain. 
\item Recently, Gupta and Seshadri~\cite[Theorem 1.2]{GS2017} provided a proof of Corollary~\ref{cor:teich} which relies on the ergodicity of the Teichm{\"u}ller geodesic flow.
\end{enumerate}
\end{remark}

If $\Omega \subset \Cb^d$ is a bounded domain, $\Gamma \leq \Aut(\Omega)$ is a discrete group acting freely on $\Omega$, and $\Gamma \backslash \Omega$ is a quasi-projective variety, then a result of Griffiths implies that $\Gamma \backslash \Omega$ has finite volume with respect to the Kobayashi-Eisenman volume (see Proposition 8.12 and the discussion following Question 8.13 in~\cite{G1971}). So we have the following corollary of Theorem~\ref{thm:main}. 

\begin{corollary}\label{cor:quasi_proj} Suppose $\Omega \subset \Cb^d$ is a bounded pseudoconvex domain with $C^2$ boundary and $\Gamma \leq \Aut(\Omega)$ is a discrete group acting freely on $\Omega$. If $\Gamma \backslash \Omega$ is a quasi-projective variety, then $\Omega$ is biholomorphic to the unit ball. 
\end{corollary}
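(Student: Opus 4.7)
The plan is to observe that this corollary is essentially an immediate consequence of Theorem~\ref{thm:main}, once Griffiths' result on Kobayashi-Eisenman volumes of quasi-projective manifolds is invoked. So the proof will be extremely short: the only real content is citing the correct result from~\cite{G1971} and verifying that its hypotheses are met.

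More specifically, my first step is to apply Griffiths' theorem (Proposition~8.12 of~\cite{G1971}, together with the discussion following Question~8.13 there, as is indicated in the paragraph preceding the corollary statement). Since $\Gamma \backslash \Omega$ is a quasi-projective variety, Griffiths' result tells us that the Kobayashi-Eisenman volume of $\Gamma \backslash \Omega$ is finite. The underlying mechanism here is that a smooth compactification of $\Gamma\backslash\Omega$ by a simple normal crossings divisor allows one to control the Kobayashi-Eisenman volume form near the boundary divisor, giving integrable blow-up along the divisor, and hence finite total volume.

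Next I would simply feed this finiteness of Kobayashi-Eisenman volume into Theorem~\ref{thm:main}. The hypotheses there are that $\Omega \subset \Cb^d$ is a bounded pseudoconvex domain with $C^2$ boundary, $\Gamma \leq \Aut(\Omega)$ is discrete and acts freely, and $\Gamma\backslash\Omega$ has finite volume with respect to one of the three listed volume forms; the first three conditions are assumed in the corollary, and the fourth (with respect to the Kobayashi-Eisenman volume) is exactly what Griffiths gives us. Theorem~\ref{thm:main} then yields that $\Omega$ is biholomorphic to the unit ball.

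Since everything reduces to citing Theorem~\ref{thm:main} and Griffiths' computation, there is no real obstacle in this corollary; the content lies entirely in Theorem~\ref{thm:main} itself. The only ``care'' needed is to make sure one is using the right normalization of the Kobayashi-Eisenman volume and to confirm that Griffiths' hypothesis (smoothness of the quotient and existence of a good compactification) is automatic for a quasi-projective quotient together with the free action of $\Gamma$.
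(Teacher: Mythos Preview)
Your proposal is correct and matches the paper's own argument exactly: the paper also derives the corollary immediately from Theorem~\ref{thm:main} by invoking Griffiths' result (Proposition~8.12 and the discussion after Question~8.13 in~\cite{G1971}) to obtain finite Kobayashi--Eisenman volume of $\Gamma\backslash\Omega$.
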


The proof of Theorem~\ref{thm:main} uses the Levi form of the boundary and hence does not easily generalize to domains whose boundaries have less than $C^2$ regularity. However, by assuming our domain is convex we can lower the required regularity to $C^{1,\epsilon}$ for any $\epsilon > 0$. 

\begin{theorem}\label{thm:main_convex} Suppose $\Omega \subset \Cb^d$ is a bounded convex domain with $C^{1,\epsilon}$ boundary and $\Gamma \leq \Aut(\Omega)$ is a discrete group acting freely on $\Omega$. If $\Gamma \backslash \Omega$ has finite volume with respect to either the Bergman volume, the K{\"a}hler-Einstein volume, or the Kobayashi-Eisenman volume, then $\Omega$ is biholomorphic to the unit ball. 
\end{theorem}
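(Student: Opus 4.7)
The approach is to mimic the proof of Theorem~\ref{thm:main} while replacing Levi-form arguments, which require $C^2$ regularity, by convex-geometric substitutes that make sense at $C^{1,\epsilon}$ regularity. First I would reduce to the case where $\Gamma \backslash \Omega$ is \emph{not} compact: if $\Gamma$ acts cocompactly, then the convex version of the Wong-Rosay theorem (Frankel's theorem, valid for convex domains with $C^{1,\epsilon}$ boundary) already gives $\Omega \cong \Bb^d$. So from now on assume the quotient is a finite-volume noncompact complex manifold.

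Second, I would use the finite-volume hypothesis to produce ``cusp'' behavior in $\Omega$: a sequence $\gamma_n \in \Gamma$ and a basepoint $z_0 \in \Omega$ such that $\gamma_n(z_0) \to p \in \partial\Omega$. The key point is that, for each of the three candidate volume forms, one has an asymptotic lower bound near a boundary point in terms of the local convex geometry: if $\partial\Omega$ contained a nontrivial complex tangent disk at $p$, the volume of every neighborhood of $p$ in $\Gamma\backslash\Omega$ would be infinite. This forces $p$ to be \emph{$\Cb$-strictly convex}: $\partial\Omega$ meets each affine complex line tangent to it at $p$ only at $p$ itself. This is the $C^{1,\epsilon}$ analogue of ``strongly pseudoconvex point'' used in the $C^2$ proof of Theorem~\ref{thm:main}.

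Third, I would run the scaling method of Pinchuk/Frankel, adapted to convex $C^{1,\epsilon}$ domains. Choose affine maps $T_n$ normalizing $\gamma_n(z_0)$ to a fixed interior point and the supporting hyperplane at the nearest boundary point to a fixed hyperplane. Convexity ensures the rescaled domains $T_n(\Omega)$ converge in the local Hausdorff topology to a convex model domain $\Omega_\infty$, and $\Cb$-strict convexity of $p$, together with $C^{1,\epsilon}$ control on $\partial\Omega$ near $p$, forces $\Omega_\infty = \Bb^d$. A normal families argument (using that convex domains not containing a complex line are Kobayashi hyperbolic) then extracts from $T_n \circ \gamma_n$ a subsequential limit $F\colon \Omega \to \Bb^d$; the analogous argument applied to $\gamma_n^{-1} \circ T_n^{-1}$ produces an inverse, and hence a biholomorphism $\Omega \cong \Bb^d$.

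\textbf{Main obstacle.} The delicate step will be Step 2: extracting $\Cb$-strict convexity of the boundary point $p$ from finite volume, at only $C^{1,\epsilon}$ regularity, \emph{simultaneously} for all three volume forms. For the Kobayashi-Eisenman volume this should follow by comparison with supporting balls (monotonicity under inclusion of the extremal disks); the Bergman and K\"ahler-Einstein cases will require more work, since they are more sensitive to the global geometry of $\Omega$ than purely local convex estimates detect, and one cannot appeal to Fefferman-type boundary expansions that require $C^2$ regularity. Once $\Cb$-strict convexity at $p$ is in hand, the scaling step in Step 3 is standard for convex $C^{1,\epsilon}$ domains.
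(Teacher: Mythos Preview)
Your approach is genuinely different from the paper's, and it has a real gap in Step~3 that is not the one you flagged.

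\medskip

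\textbf{What the paper does.} The paper does \emph{not} attempt a direct rescaling argument. Instead it reduces everything to two black-box structural results: Theorem~\ref{thm:Z} (if the limit set $\Lc(\Omega)$ meets at least two distinct closed complex faces of $\partial\Omega$, then $\Aut(\Omega)$ has finitely many components and $\Aut_0(\Omega)/N$ is non-compact simple of real rank one) and Theorem~\ref{thm:LW} (Liu--Wu). The entire content of the proof in Section~5 is the Lemma: for \emph{every} boundary point $\xi$, the limit set meets the closed complex face through $\xi$. That Lemma is proved by rerunning the horofunction argument of Theorem~\ref{thm:main}, replacing the Bergman distance by the Kobayashi distance and replacing Theorem~\ref{thm:GP} by the convex Gromov-product estimate Theorem~\ref{thm:GP_convex}. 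Applying the Lemma to two boundary points with distinct complex tangent hyperplanes gives two complex faces in $\Lc(\Omega)$; then Theorem~\ref{thm:Z} yields $\Aut_0(\Omega)\neq 1$ and finitely many components, and Theorem~\ref{thm:LW} (together with Mok--Tsai for the $C^1$-boundary case) gives the ball.

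\medskip

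\textbf{The gap in your plan.} Your Step~3 asserts that $\Cb$-strict convexity of the accumulation point $p$, together with $C^{1,\epsilon}$ regularity, forces the Pinchuk/Frankel rescaling limit to be $\Bb^d$. This is false as a local statement. Take the egg $\{\abs{z_1}^2+\abs{z_2}^{2m}<1\}$ with $m\geq 2$: it is smooth, convex, and $\Cb$-strictly convex at every boundary point, yet at a weakly pseudoconvex point the affine rescalings converge to the model $\{\Imaginary w > \abs{z}^{2m}\}$, not to the ball. At $C^{1,\epsilon}$ regularity the situation is worse: $\Cb$-strict convexity does not even force finite type, so the rescaling limits can be quite wild convex domains. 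The ball appears as the rescaling limit only at \emph{strongly pseudoconvex} points, a notion that is unavailable below $C^2$. So even if your Step~2 succeeds, Step~3 does not follow, and I do not see how to repair it without importing exactly the structural machinery (Theorems~\ref{thm:LW} and~\ref{thm:Z}) that the paper uses.

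\medskip

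\textbf{On your Step~2.} Your volume argument for $\Cb$-strict convexity is also not convincing as stated: the claim ``if $\partial\Omega$ contains a complex disk at $p$ then every neighborhood of $p$ in $\Gamma\backslash\Omega$ has infinite volume'' confuses volume in $\Omega$ with volume in the quotient; a neighborhood of $p$ in $\Omega$ has infinite invariant volume at \emph{every} boundary point, disk or not, and the passage to $\Gamma\backslash\Omega$ is exactly where the difficulty lies. You correctly identify this step as delicate, but note that the paper avoids it entirely: rather than proving a single limit point is nice, the paper proves the limit set meets \emph{every} complex face, which is both easier (via horofunctions and Theorem~\ref{thm:GP_convex}) and exactly what Theorem~\ref{thm:Z} needs.
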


\begin{remark} \ \begin{enumerate}
\item The proof will use a recent result of Liu and Wu~\cite{LW2018}, see Theorem~\ref{thm:LW} below, and a recent result in~\cite{Z2017convex}, see Theorem~\ref{thm:Z} below. 
\item It is conjectured that a bounded convex domain with a finite volume quotient (with no assumptions on the regularity of $\partial\Omega$) must be a bounded symmetric domain, see for instance~\cite[Conjecture 1.12]{LW2018}.
\item Using Theorem~\ref{thm:main_convex}, the hypothesis of Corollary~\ref{cor:quasi_proj} can be modified to assume that $\Omega$ is a bounded convex domain with $C^{1,\epsilon}$ boundary. Theorem~\ref{thm:main_convex} also can be used to show that $\Tc_g$ $(g \geq 2)$ is not biholomorphic to a convex domain with $C^{1,\epsilon}$ boundary, however a recent result of Markovic~\cite{M2017} implies that $\Tc_{g}$ is not biholomorphic any convex domain when $g \geq 2$ (with no regularity assumptions on the boundary of the convex domain). 
\end{enumerate}
\end{remark}

\subsection{Outline of the proofs} We will use a theorem of Wong and Rosay to prove Theorem~\ref{thm:main}.

\begin{theorem}[Wong-Rosay Ball Theorem~\cite{W1977, R1979}]\label{thm:WR} Suppose $\Omega \subset \Cb^d$ is a bounded domain. Assume that $\partial \Omega$ is $C^2$ and strongly pseudoconvex in a neighborhood of $\xi \in \partial \Omega$. If there exists some $z_0 \in \Omega$ and a sequence $\varphi_n \in \Aut(\Omega)$ such that $\varphi_n(z_0) \rightarrow \xi$, then $\Omega$ is biholomorphic to the unit ball. 
\end{theorem}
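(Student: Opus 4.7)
The plan is to apply the scaling method of Pinchuk. The strategy is to rescale $\Omega$ near the boundary point $\xi$ by dilations that stretch the complex normal direction more than the complex tangential directions, producing in the limit the Siegel half-space $\Hc = \{ w \in \Cb^d : \Real w_1 + \abs{w_2}^2 + \dots + \abs{w_d}^2 < 0 \}$, which is biholomorphic to $\Bb^d$. Composing these dilations with $\varphi_n$ will then yield a sequence of holomorphic maps $\Omega \to \Cb^d$ whose subsequential limit, if everything cooperates, is a biholomorphism $\Omega \to \Hc$.

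First, using strong pseudoconvexity at $\xi$ and a holomorphic change of coordinates, I would arrange $\xi = 0$ and bring $\partial\Omega$ into the local osculating form
\[
\rho(w) = \Real w_1 + \abs{w_2}^2 + \dots + \abs{w_d}^2 + o(\norm{w}^2) = 0.
\]
Set $p_n = \varphi_n(z_0)$, let $q_n \in \partial\Omega$ be a nearest boundary point to $p_n$, and write $\delta_n$ for the Euclidean distance from $p_n$ to $q_n$. Define an affine map $T_n$ which sends $q_n$ to the origin, aligns the outward real normal at $q_n$ with the positive $\Real w_1$-axis, and then dilates coordinates anisotropically, scaling $w_1$ by $1/\delta_n$ and $w_2, \dots, w_d$ by $1/\sqrt{\delta_n}$. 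A routine calculation using the osculating form shows that $T_n(\Omega)$ converges in the local Hausdorff sense to $\Hc$ and that $T_n(p_n)$ converges to a distinguished basepoint $p_\infty \in \Hc$. Since $\Omega$ is bounded, Montel's theorem implies that $f_n = T_n \circ \varphi_n$, viewed as a sequence of holomorphic maps $\Omega \to \Cb^d$, is normal on compact subsets; after passing to a subsequence we obtain a holomorphic limit $f : \Omega \to \conj{\Hc}$ with $f(z_0) = p_\infty$, and a standard Hurwitz-type argument (using that $p_\infty$ is an interior point) shows $f(\Omega) \subset \Hc$.

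The crux of the argument, and the step I expect to be the main obstacle, is to show that $f$ is actually a biholomorphism onto $\Hc$. The key input is the Kobayashi pseudodistance: each $\varphi_n$ is a $k_\Omega$-isometry, each $T_n$ is an isometry between the Kobayashi pseudodistances on $\Omega$ and on $T_n(\Omega)$, and the Kobayashi pseudodistance behaves continuously under the Hausdorff convergence $T_n(\Omega) \to \Hc$ on compact subsets. Combining these facts, the limit $f$ is an isometry from $(\Omega, k_\Omega)$ onto its image in $(\Hc, k_{\Hc})$; in particular $f$ is injective and a local biholomorphism. Surjectivity follows by running the same construction in reverse, starting from the sequence $\varphi_n^{-1}$ and the scaled domains $T_n(\Omega)$: after passing to a further subsequence this produces a holomorphic limit $g : \Hc \to \Omega$ which, by the commuting scaling diagram, inverts $f$. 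Since $\Hc$ is biholomorphic to $\Bb^d$, $\Omega$ is biholomorphic to the unit ball.
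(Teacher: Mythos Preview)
The paper does not give its own proof of this statement: Theorem~\ref{thm:WR} is quoted from \cite{W1977, R1979} as a known input and is used as a black box in the proof of Theorem~\ref{thm:main}. So there is nothing in the paper to compare your argument against.

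That said, your outline is the standard Pinchuk scaling proof, and the overall architecture is correct. There is one genuine slip: the sentence ``Since $\Omega$ is bounded, Montel's theorem implies that $f_n = T_n \circ \varphi_n$ \dots\ is normal on compact subsets'' is wrong as written. Boundedness of the source $\Omega$ is irrelevant for Montel; what you need is local boundedness of the \emph{images}, and the dilations $T_n$ blow up, so $T_n(\Omega)$ is not contained in any fixed ball. The correct justification is the one you allude to later: for any compact $K \subset \Omega$ one has $K_\Omega(z_0, z) \leq R$ for $z \in K$, hence $K_{T_n(\Omega)}(T_n p_n, f_n(z)) \leq R$, and then uniform estimates on the Kobayashi metric of $T_n(\Omega)$ near the basepoint (coming from the Hausdorff convergence $T_n(\Omega) \to \Hc$ and the explicit metric on $\Hc$) force $f_n(K)$ into a fixed Euclidean ball. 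Once you have that, Montel applies. You should rewrite that step so the logic is in the right order.

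The surjectivity argument via ``running the construction in reverse'' is also sketchy: $\varphi_n^{-1}$ is defined on $\Omega$, not on $T_n(\Omega)$, so the maps you want are $g_n = \varphi_n^{-1} \circ T_n^{-1} : T_n(\Omega) \to \Omega$, and you need to explain why these converge on compacta of $\Hc$ (here boundedness of $\Omega$ \emph{does} give normality) and why the limit $g$ satisfies $g \circ f = \id_\Omega$ and $f \circ g = \id_{\Hc}$. This is routine but should be stated correctly.
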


When $\Omega$ is a bounded domain with $C^2$ boundary, then there exists some $\xi \in \partial \Omega$ which is strongly pseudoconvex (see Observation~\ref{obs:strong_pc_exist} below). If $\Aut(\Omega)$ acts co-compactly on $\Omega$ then it is easy to show that there exists some $z_0 \in \Omega$ and a sequence $\varphi_n \in \Aut(\Omega)$ such that $\varphi_n(z_0) \rightarrow \xi$. So one has the following Corollary to Theorem~\ref{thm:WR}:

\begin{corollary} Suppose $\Omega \subset \Cb^d$ is a bounded domain with $C^2$ boundary. If $\Aut(\Omega)$ acts co-compactly on $\Omega$, then $\Omega$ is biholomorphic to the unit ball. 
\end{corollary}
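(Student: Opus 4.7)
The plan is to reduce to Theorem~\ref{thm:WR} by producing a strongly pseudoconvex boundary point together with a sequence of automorphisms pushing some fixed interior point to it. By Observation~\ref{obs:strong_pc_exist} below, there exists $\xi \in \partial\Omega$ near which $\partial\Omega$ is strongly pseudoconvex; fix a sequence $z_n \in \Omega$ with $z_n \to \xi$.

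Co-compactness of the $\Aut(\Omega)$-action supplies a compact set $K \subset \Omega$ such that $\Aut(\Omega) \cdot K = \Omega$. Choose $\varphi_n \in \Aut(\Omega)$ with $\varphi_n(z_n) \in K$, and after extraction assume $\varphi_n(z_n) \to w_0 \in K$. Set $\psi_n = \varphi_n^{-1} \in \Aut(\Omega)$. Since $\Omega$ is bounded, $\{\psi_n\}$ is a normal family of holomorphic maps into $\Cb^d$, so after a further subsequence $\psi_n \to \psi$ locally uniformly on $\Omega$, where $\psi : \Omega \to \overline{\Omega}$ is holomorphic. From $\psi_n(\varphi_n(z_n)) = z_n \to \xi$, together with local uniform convergence of $\psi_n$ on a neighborhood of $w_0$, we obtain $\psi(w_0) = \xi \in \partial\Omega$.

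The classical dichotomy for limits of sequences in $\Aut(\Omega)$ --- proved via Hurwitz's theorem applied to the Jacobian determinants --- states that such a locally uniform limit is either itself an element of $\Aut(\Omega)$ or a constant map into $\partial\Omega$. Since $\psi(w_0) \in \partial\Omega$, we must have $\psi \equiv \xi$, and in particular $\varphi_n^{-1}(w_0) \to \xi$. Applying Theorem~\ref{thm:WR} to the sequence $\psi_n = \varphi_n^{-1} \in \Aut(\Omega)$ and the base point $w_0$ then yields that $\Omega$ is biholomorphic to the unit ball. The only nontrivial ingredient beyond the Wong--Rosay theorem itself is this automorphism-limit dichotomy; the remainder of the argument is soft, amounting to a normal-family extraction and a bookkeeping of which points are being pushed where.
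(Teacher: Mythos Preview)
Your argument is correct and follows the same route the paper sketches: locate a strongly pseudoconvex boundary point via Observation~\ref{obs:strong_pc_exist}, use co-compactness to pull a sequence of interior points back to it, and invoke Theorem~\ref{thm:WR}.

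One remark, though. The dichotomy you cite is stated too strongly. For a general bounded domain, a locally uniform limit of automorphisms that fails to be an automorphism need not be \emph{constant}; H.~Cartan's theorem only yields $\psi(\Omega)\subset\partial\Omega$. (On the bidisc $\Db^2$, take $\varphi_n(z,w)=(\alpha_n(z),w)$ with $\alpha_n\in\Aut(\Db)$ diverging; the limit is $(z,w)\mapsto(\zeta,w)$ for some $\zeta\in\partial\Db$, not a constant.) Fortunately you do not need constancy at all: you already established $\psi(w_0)=\xi$ directly from $\psi_n(\varphi_n(z_n))=z_n\to\xi$ and local uniform convergence, and that same local uniform convergence immediately gives $\psi_n(w_0)\to\psi(w_0)=\xi$. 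So the dichotomy paragraph is superfluous and can simply be deleted; what remains is exactly the ``easy'' argument the paper alludes to.
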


In the case when $\Omega$ only admits a finite volume quotient, finding $z_0 \in \Omega$ and a sequence $\varphi_n \in \Aut(\Omega)$ such that $\varphi_n(z_0)$ converges to a certain boundary point $\xi \in \partial \Omega$ is much harder. We accomplish this task by considering the behavior of the Bergman distance and in particular the shape of horospheres near a strongly pseudoconvex point. The squeezing function also plays an important role in understanding the complex geometry of $\Omega$.

For convex domains, there are precise estimates for the Kobayashi distance and so in the proof of Theorem~\ref{thm:main_convex} we consider horospheres with respect to the Kobayashi distance (instead of the Bergman distance). Since the hypothesis of Theorem~\ref{thm:main_convex} only assumes $\partial \Omega$ has $C^{1,\epsilon}$ boundary there is no hope of using the Wong-Rosay Ball Theorem. Instead we reduce to two recent results about the automorphism group of convex domains. Before stating these results we need a few definitions: 
\begin{enumerate}
\item Given a bounded domain $\Omega \subset \Cb^d$ let $\Aut_0(\Omega)$ denote the connected component of the identity in $\Aut(\Omega)$. 
\item When $\Omega \subset \Cb^d$ is a bounded domain, the \emph{limit set of $\Omega$}, denoted $\Lc(\Omega)$ is the set of points $x \in \partial \Omega$ where there exists some $z \in \Omega$ and a sequence $\varphi_n \in \Aut(\Omega)$ such that $\varphi_n(z) \rightarrow x$.
\item Given a convex domain $\Omega \subset \Cb^d$ with $C^1$ boundary and $x \in \partial \Omega$, let $T_{x}^{\Cb} \partial \Omega \subset \Cb^d$ be the complex affine hyperplane tangent to $\partial\Omega$ at $x$. Then the \emph{closed complex face of $x$ in $\partial \Omega$} is the set $T_{x}^{\Cb} \partial \Omega \cap \partial \Omega$. 
\end{enumerate}

Liu and Wu recently proved the following rigidity result.

\begin{theorem}[{Liu-Wu~\cite{LW2018}\label{thm:LW}}] Suppose $\Omega \subset \Cb^d$ is a bounded convex domain, $\Gamma \leq \Aut(\Omega)$ is a discrete group acting freely on $\Omega$, and $\Gamma \backslash \Omega$ has finite volume with respect to either the Bergman volume, the K{\"a}hler-Einstein volume, or the Kobayashi-Eisenman volume. If either:
\begin{enumerate}
\item $\Gamma \leq \Aut_0(\Omega)$, 
\item $\Aut_0(\Omega) \neq 1$ and $\Gamma$ is irreducible,
\item $\Omega$ has $C^1$ boundary and $\Gamma$ is irreducible, 
\item $d=2$ and $\Aut_0(\Omega) \neq 1$, or
\item $d=2$ and $\Omega$ has $C^1$ boundary,
\end{enumerate}
then $\Omega$ is biholomorphic to a bounded symmetric domain.
\end{theorem}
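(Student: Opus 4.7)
The strategy is to reduce each of the five cases to Frankel's theorem that a bounded convex domain whose biholomorphism group acts cocompactly is biholomorphic to a bounded symmetric domain. The finite-volume hypothesis must first be upgraded to cocompactness of $\Aut_0(\Omega)$, and the various supplementary assumptions serve to make this upgrade possible.

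I would first handle case (1). Since $\Gamma \leq \Aut_0(\Omega)$ acts isometrically for the Bergman metric (complete on bounded convex domains) and $\Gamma \backslash \Omega$ has finite Bergman volume, a Haar-integration argument identifies $\Gamma$ with a lattice in $\Aut_0(\Omega)$. The isotropy subgroups are compact, so $\Aut_0(\Omega) \backslash \Omega$ has finite invariant volume as well; a Borel-density-type argument together with Frankel-style rescaling at an escaping orbit shows that $\Aut_0(\Omega)$ acts cocompactly, and Frankel's theorem then concludes. Comparability up to constants of the Bergman, K{\"a}hler-Einstein, and Kobayashi-Eisenman volumes on bounded convex domains lets this argument cover the three volume hypotheses uniformly.

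For cases (2) and (4), where $\Aut_0(\Omega) \neq 1$ is assumed, the foliation of $\Omega$ by $\Aut_0(\Omega)$-orbits is an $\Aut(\Omega)$-invariant K{\"a}hler foliation whose leaves are complex submanifolds and along which any invariant K{\"a}hler metric splits. Irreducibility of $\Gamma$ in case (2) rules out a non-trivial leaf splitting, since otherwise $\Gamma$ would virtually decompose as a product; in case (4), the complex dimension $d=2$ forces the orbit foliation to be either trivial or transitive, with the transitive case handled directly by Frankel's theorem. For cases (3) and (5) one first establishes that $\Aut_0(\Omega) \neq 1$: the $C^1$-boundary hypothesis controls the closed complex faces $T_x^{\Cb} \partial \Omega \cap \partial \Omega$, and the finite-volume hypothesis furnishes an orbit escaping to the boundary whose renormalization produces a non-trivial one-parameter subgroup of $\Aut_0(\Omega)$. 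Once $\Aut_0(\Omega)$ is non-trivial, cases (3) and (5) reduce to cases (2) and (4) respectively.

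The main obstacle is the rescaling step. Frankel's original argument proceeds inside a compact fundamental domain, whereas in the finite-volume setting orbits may escape into cusps whose Kobayashi or Bergman geometry is a priori uncontrolled. The technical heart of the proof is to show that, under each of the regularity or dimension hypotheses, the rescaling limit at an escaping orbit is again a bounded convex domain of complex dimension $d$ with a non-trivial limiting symmetry, which then descends to produce the desired automorphisms of $\Omega$. The $C^1$-boundary regularity is used here to keep complex faces non-degenerate in the limit, and the dimension $d=2$ hypothesis is used because the few possible limit geometries can then be enumerated and classified directly; the combinations (1)--(5) are exactly those for which one of these mechanisms applies.
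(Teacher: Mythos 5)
This statement is not proved in the paper: it is Liu--Wu's theorem, quoted verbatim from \cite{LW2018} as an external input, and the only proof-adjacent content the author supplies is the remark that parts (3) and (5) follow from parts (2) and (4) by the rescaling method (together with the Mok--Tsai fact that a convex bounded symmetric domain with $C^1$ boundary is a ball). So there is no in-paper argument to compare yours against; the expected ``proof'' here is a citation, and a genuine proof means reproducing Liu and Wu's work.

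Judged on its own terms, your sketch has concrete gaps. The central one is the upgrade from finite covolume to cocompactness in case (1): finite volume of $\Gamma \backslash \Omega$ does not identify $\Gamma$ with a lattice in $\Aut_0(\Omega)$ unless the $\Aut_0(\Omega)$-orbits have full measure in $\Omega$ (e.g.\ the action is transitive); in general the orbits are proper, lower-dimensional submanifolds of measure zero, so no Haar-integration or disintegration argument transfers the finite-volume hypothesis to $\Gamma \backslash \Aut_0(\Omega)$. Even granting a lattice, finite covolume does not imply cocompactness (nonuniform lattices exist), so the sentence asserting that ``a Borel-density-type argument together with Frankel-style rescaling shows that $\Aut_0(\Omega)$ acts cocompactly'' is the hard content of the theorem asserted rather than proved. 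A second error: the orbits of the real Lie group $\Aut_0(\Omega)$ acting by biholomorphisms are generally not complex submanifolds (a one-parameter subgroup typically has real one-dimensional orbits), so the ``invariant K\"ahler foliation whose leaves are complex submanifolds'' underlying your treatment of cases (2) and (4) does not exist as described, and the splitting/irreducibility dichotomy built on it is unsupported. The one part of your outline that does track the paper is the reduction of (3) to (2) and (5) to (4) via rescaling, which is exactly what the author's remark after the theorem states; for everything else you should simply cite \cite{LW2018}.
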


\begin{remark} By the so-called rescaling method, part (3) (respectively part (5)) is a consequence of part (2) (respectively part (4)). Also, by a result of Mok and Tsai~\cite{MT1992}: if $\Omega$ is a bounded symmetric domain which is convex and has $C^1$ boundary, then $\Omega$ is biholomorphic to the unit ball. \end{remark}

We recently proved the following result.

\begin{theorem}\cite{Z2017convex}\label{thm:Z} Suppose $\Omega \subset \Cb^d$ is a bounded convex domain with $C^{1,\epsilon}$ boundary. If $\Lc(\Omega)$ intersects at least two different closed complex faces of $\partial \Omega$, then \begin{enumerate}
\item $\Aut(\Omega)$ has finitely many components,
\item there exists a compact normal subgroup $N \leq \Aut_0(\Omega)$ such that $\Aut_0(\Omega)/N$ is a non-compact simple Lie group with real rank one. 
\end{enumerate}
\end{theorem}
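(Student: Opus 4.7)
The plan is to transport the dynamics of $\Aut(\Omega)$ into the setting of a suitable ``visibility'' structure on $\Omega$ endowed with the Kobayashi metric $d_\Omega^K$, mimicking arguments familiar from negatively curved geometry. The first step is to prove a visibility principle for distinct complex faces: if $\xi_1, \xi_2 \in \partial\Omega$ lie in different closed complex faces of $\partial\Omega$, then for any sequence of Kobayashi geodesics $\sigma_n:[a_n,b_n] \to \Omega$ with $\sigma_n(a_n) \to \xi_1$ and $\sigma_n(b_n) \to \xi_2$, there exist times $t_n$ with $\sigma_n(t_n)$ bounded away from $\partial\Omega$. This should follow from convexity, the $C^{1,\epsilon}$ regularity of $\partial\Omega$ (which yields quantitative lower bounds for $d_\Omega^K$ near the boundary), and the fact that the complex affine hyperplanes $T^{\Cb}_{\xi_i}\partial\Omega$ are distinct, preventing a geodesic from shadowing both directions.

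Combining visibility with the hypothesis that $\Lc(\Omega)$ meets two different complex faces, I would next produce a hyperbolic-type automorphism. Choose $\varphi_n, \psi_n \in \Aut(\Omega)$ with $\varphi_n(z_0) \to \xi_1 \in F_1$ and $\psi_n(z_0) \to \xi_2 \in F_2$ for distinct closed complex faces $F_1 \neq F_2$. A standard ping-pong / contracting-expanding argument, enabled by visibility, yields some $\gamma \in \Aut(\Omega)$ with North--South dynamics on $\overline\Omega$: a unique attracting fixed point $\xi^+$ and unique repelling fixed point $\xi^-$ lying in distinct complex faces. The cyclic group $\langle\gamma\rangle$ then admits a $\gamma$-invariant quasi-geodesic axis in $\Omega$.

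For conclusion (1), I would exploit the properness of the $\Aut(\Omega)$-action on $\Omega$ together with the existence of $\gamma$: any element of $\Aut(\Omega)$ permutes the attracting/repelling pair of some conjugate of $\gamma$, and the stabilizer of such a pair must preserve the axis (up to bounded distance) and hence lie in a set which is compact modulo the one-parameter translation subgroup along that axis. Combined with the fact that $\Aut(\Omega)$ is a Lie group acting properly, this forces finitely many connected components. For conclusion (2), I would analyze the $\Aut_0(\Omega)$-action on the space of closed complex faces meeting $\Lc(\Omega)$, which inherits a natural ``conformal-like'' structure from the Kobayashi geometry. The kernel of this action on the limit set is a candidate for the compact normal subgroup $N$; the quotient $\Aut_0(\Omega)/N$ must then act faithfully with a rank-one hyperbolic dynamics, and by the classification of connected Lie groups admitting such an action together with non-compactness coming from $\gamma$, it should be forced to be a simple Lie group of real rank one.

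The main obstacle I anticipate is twofold. First, establishing visibility and related Kobayashi estimates with only $C^{1,\epsilon}$ regularity (rather than $C^2$) is delicate; the standard boundary bumping and localization arguments need to be replaced by softer convex-geometric techniques, and one must carefully quantify how the complex tangent hyperplane varies. Second, isolating the compact normal subgroup $N$ and verifying the rank-one simple structure of $\Aut_0(\Omega)/N$ is the most technical step: one must rule out reducible factors and higher-rank behavior using only the axiomatic input of two distinct limit faces and the properness of the action.
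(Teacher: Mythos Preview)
This theorem is not proved in the present paper: it is quoted verbatim from \cite{Z2017convex} and used as a black box in the proof of Theorem~\ref{thm:main_convex}. There is therefore no ``paper's own proof'' against which to compare your proposal.

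That said, a brief remark on your sketch as a standalone plan. The overall shape---visibility of Kobayashi geodesics between distinct complex faces, production of a hyperbolic-type element via ping-pong, and then extraction of rank-one structure from the induced action on the limit set---is indeed the philosophy underlying \cite{Z2017convex}. But your outline is considerably softer than what is actually needed. In particular: (i) the argument for finitely many components does not follow from the axis-stabilizer reasoning you describe; one really needs to understand the structure of the full limit set and how $\Aut(\Omega)$ permutes complex faces, not just a single attracting/repelling pair; (ii) identifying $N$ as ``the kernel of the action on the limit set'' and then invoking an unspecified classification of Lie groups with ``rank-one hyperbolic dynamics'' skips the substantive work---in \cite{Z2017convex} this step goes through a careful analysis of one-parameter subgroups, their attracting sets in $\partial\Omega$, and ultimately the structure theory of semisimple Lie groups, not a soft classification statement. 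Your acknowledged obstacles are real, and resolving them is essentially the content of the cited paper rather than a routine matter.
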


Hence to prove Theorem~\ref{thm:main_convex} it is enough to show that $\Lc(\Omega)$ intersects at least two different closed complex faces of $\partial \Omega$.

 \subsection*{Acknowledgements} This material is based upon work supported by the National Science Foundation under grant DMS-1760233.

\section{Preliminaries}

\subsection{Notations:}

Suppose $\Omega \subset \Cb^d$ is a bounded pseudoconvex domain, then
\begin{enumerate}
\item Let $k_\Omega: \Omega \times \Cb^d \rightarrow \Rb_{ \geq 0}$ denote the infinitesimal Kobayashi metric, $K_\Omega: \Omega \times \Omega \rightarrow \Rb_{\geq 0}$ denote the Kobayashi distance on $\Omega$, and $\Vol_{K}$ denote the Kobayashi-Eisenman volume form,
\item Let $g_B$ denote the Bergman metric on $\Omega$, $B_\Omega$ denote the Bergman distance on $\Omega$, and $\Vol_{B}$ denote the Riemannian volume form associated to $g_B$. We will also let $b_\Omega:\Omega \times \Cb^d \rightarrow \Rb$ denote the norm associated to $g_B$, that is 
\begin{align*}
b_\Omega(x;v) = \sqrt{g_B(v,v)}
\end{align*}
when $v \in T_x \Omega$.
\item Let $g_{KE}$ denote the K{\"a}hler-Einstein metric on $\Omega$ with Ricci curvature $-1$ constructed by Cheng-Yau~\cite{CY1980} when $\Omega$ has $C^2$ boundary and Mok-Yau~\cite{MY1983} in general. And let $\Vol_{KE}$ denote the Riemannian volume form associated to $g_{KE}$.
\end{enumerate}

Throughout the paper $\norm{ \cdot}$ will denote the standard Euclidean norm on $\Cb^d$. Given $z_0 \in \Cb^d$ and $r >0$ define
\begin{align*}
\Bb_d(z_0;r) = \{ z\in \Cb^d : \norm{z-z_0} < r\}.
\end{align*}
Finally, given a domain $\Omega \subset \Cb^d$ and $z \in \Omega$ define
\begin{align*}
\delta_\Omega(z) = \inf\{ \norm{w -z} : w \in \partial \Omega\}.
\end{align*}

\subsection{The squeezing function}\label{sec:squeeze}

Given a domain $\Omega \subset \Cb^d$ let $s_\Omega : \Omega \rightarrow (0,1]$ be the \emph{squeezing function on $\Omega$}, that is 
\begin{align*}
s_\Omega(z) = \sup\{ r : & \text{ there exists an one-to-one holomorphic map } \\
& f: \Omega \rightarrow \Bb_d(0;1) \text{ with } f(z)=0 \text{ and } \Bb_d(0;r) \subset f(\Omega) \}.
\end{align*}

In this section we recall a result of Sai-Kee Yeung.

\begin{theorem}\label{thm:squeeze}\cite[Theorem 2]{Y2009} Suppose $s > 0$ and $d > 0$. Then there exists $C, \delta, \epsilon, \kappa >0$ such that: if $\Omega \subset \Cb^d$, $z_0 \in \Omega$, $s_\Omega(z_0) > s$, and 
\begin{align*}
B_\epsilon = \{ z \in \Omega : B_\Omega(z_0,z) \leq \epsilon\},
\end{align*}
 then \begin{enumerate}
 \item $B_\epsilon \Subset \Omega$, 
\item $g_B$, $g_{KE}$, and $k_\Omega$ are all $C$-bi-Lipschitz on $B_\epsilon$,
\item the sectional curvature of $g_B$ is bounded in absolute value by $\kappa$ on $B_\epsilon$, 
\item the injectivity radius of $g_B$ is bounded below by $\delta$ on $B_\epsilon$, and
\item if $\Vol$ denotes either the Bergman volume, the K{\"a}hler-Einstein volume, or the Kobayashi-Eisenman volume, then
\begin{align*}
\Vol\Big( \{ z \in \Omega : B_\Omega(z, z_0) \leq r\}\Big) \geq r^{2d}/C
\end{align*}
for all $r \in [0,\epsilon]$.
\end{enumerate}
\end{theorem}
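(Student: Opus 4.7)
The plan is to use the squeezing hypothesis to reduce the problem to a sandwich situation and then apply monotonicity arguments together with standard facts about the unit ball. By the definition of $s_\Omega(z_0) > s$, there is an injective holomorphic map $f \colon \Omega \to \Bb_d(0;1)$ with $f(z_0) = 0$ and $\Bb_d(0;s) \subset f(\Omega)$. Since each of $g_B$, $g_{KE}$, $k_\Omega$, their associated volume forms, the Bergman distance, and the sectional curvature and injectivity radius of $g_B$ is a biholomorphic invariant, I may replace $(\Omega, z_0)$ by $(f(\Omega), 0)$ and assume
\[
\Bb_d(0;s) \subset \Omega \subset \Bb_d(0;1), \qquad z_0 = 0;
\]
all subsequent constants will then depend only on $s$ and $d$.

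The first substantive step is to show that each of the three infinitesimal metrics at the origin is uniformly comparable to the Euclidean norm. For the Kobayashi metric this is immediate from domain monotonicity: $k_{\Bb_d(0;1)}(0;v) \leq k_\Omega(0;v) \leq k_{\Bb_d(0;s)}(0;v)$, and both ends are explicit multiples of $\norm{v}$. For the Bergman metric one expresses $g_B(z;v)^2$ as a ratio of two $L^2$-extremal quantities, both monotone in the domain, and sandwiches the ratio between ratios of the two explicit ball quantities. For the K\"ahler--Einstein metric the analogous two-sided bound follows from Yau's Schwarz lemma applied to the inclusions $\Bb_d(0;s) \hookrightarrow \Omega \hookrightarrow \Bb_d(0;1)$, using that on the ball $g_{KE}$ is an explicit multiple of $g_B$.

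The second step is to promote these pointwise estimates over the whole Bergman ball $B_\epsilon$. The comparison for $g_B(0;\cdot)$ already implies that $B_\Omega(0,\cdot)$ dominates a constant multiple of $B_{\Bb_d(0;1)}(0,\cdot)$, and the latter blows up as $\norm{z} \to 1$; hence for $\epsilon$ sufficiently small (depending only on $s$ and $d$) one has $B_\epsilon \subset \Bb_d(0;s/2)$, proving (1). For any $w \in B_\epsilon$ the sandwich $\Bb_d(w;s/2) \subset \Omega \subset \Bb_d(w;2)$ still holds, so re-running the argument of the first step at $w$ yields (2). The volume estimate (5) then follows at once: by (2) the Bergman ball of radius $r \leq \epsilon$ contains a Euclidean ball of radius comparable to $r$, and (2) also tells us that each of the three volume forms is comparable to the Euclidean volume, giving the lower bound $r^{2d}/C$.

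Finally, the sectional curvature of $g_B$ is a rational expression in derivatives of $\log K_\Omega$ of order at most four, so (3) reduces to uniform higher-derivative control of $K_\Omega$ on $B_\epsilon$. Every $w \in B_\epsilon$ has Euclidean distance at least $s/4$ to $\partial \Omega$, so Cauchy estimates applied to the reproducing-kernel representation of $K_\Omega$ (whose relevant $L^2$-mass is controlled by monotonicity against $K_{\Bb_d(0;s)}$) give the required bounds. The injectivity radius bound (4) then follows from the uniform curvature bound together with the uniform bi-Lipschitz equivalence with the Euclidean metric via a standard bounded-geometry argument (for instance Cheeger's lemma, or direct comparison with the injectivity radius of the unit ball). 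The main obstacle is tracking uniformity through the sequence of reductions; in particular the Bergman-geometric containment $B_\epsilon \Subset \Bb_d(0;s/2)$ must be established before anything else, since it alone supplies the positive interior distance needed for the Cauchy-type estimates on $K_\Omega$ in the final step.
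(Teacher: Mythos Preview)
Your approach coincides with the paper's: reduce to the sandwich $\Bb_d(0;s)\subset\Omega\subset\Bb_d(0;1)$ and compare everything with the explicit ball data. The paper in fact only spells out part~(5) and cites Yeung for (1)--(4), so your sketch actually goes further than the paper does.

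There is, however, a genuine gap in the step you single out as critical. You write that ``the comparison for $g_B(0;\cdot)$ already implies that $B_\Omega(0,\cdot)$ dominates a constant multiple of $B_{\Bb_d(0;1)}(0,\cdot)$.'' A pointwise bound on the infinitesimal Bergman metric at a \emph{single} point does not control the Bergman distance: the latter is an infimum of path-lengths, and a competing path can leave the point where you have your estimate immediately. Unlike the Kobayashi metric, the Bergman metric is not monotone under domain inclusion, so you cannot simply invoke $g_{B,\Omega}\geq g_{B,\Bb_d(0;1)}$ globally. The fix is to route through the Kobayashi distance, which \emph{is} monotone: $K_\Omega\geq K_{\Bb_d(0;1)}$ globally, so a small Kobayashi ball in $\Omega$ sits inside $\Bb_d(0;s/2)$; on that Euclidean ball the sandwich holds at every point, giving the uniform bi-Lipschitz equivalence of $g_B$ with $k_\Omega$ there, and hence the Bergman ball $B_\epsilon$ is trapped in $\Bb_d(0;s/2)$ as well. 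Once this is in place, the rest of your outline (Cauchy estimates on $K_\Omega$ for (3), bounded-geometry for (4), and the Euclidean volume comparison for (5)) goes through.

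One smaller caution: for the K\"ahler--Einstein comparison, the Schwarz--Yau lemma applied to $\Bb_d(0;s)\hookrightarrow\Omega$ would require an upper bound on the holomorphic bisectional curvature of $g_{KE,\Omega}$, which is not available a priori. The two-sided bound on $g_{KE}$ needs either the volume-form version of the Schwarz lemma (which uses only Ricci) combined with an independent argument, or the more refined estimates in Yeung's paper; your one-line appeal to ``Yau's Schwarz lemma'' glosses over this asymmetry.
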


Parts (1)-(4) follow from~\cite[Theorem 2]{Y2009}. In~\cite[Theorem 2]{Y2009} it is assumed that $s_\Omega(z) > s$ for all $z \in \Omega$, however all the arguments are local in nature and can be easily modified to prove parts (1)-(4) in the above Theorem. Part (2) also follows from the proof of~\cite[Theorem 7.2]{LSY2004}.

Part (5) is a consequence of the definition and part (2): since $s_\Omega(z_0) > s$ we can assume that $z_0=0$ and 
\begin{align*}
\Bb_d(0;s) \subset \Omega \subset \Bb_d(0;1).
\end{align*}
Then 
\begin{align*}
k_{\Bb_d(0;1)} \leq k_\Omega \leq k_{\Bb_d(0;s)}
\end{align*}
on $\Bb_d(0;s)$. Then, from the well known explicit description of the Kobayashi metric on the ball and part (2), we see that there exists $C_1 > 0$ such that $g_B$, $g_{KE}$, and $k_\Omega$ are all $C_1$-bi-Lipschitz to the Euclidean metric on $\Bb_d(0;s/2)$. So we can find $C, \epsilon > 0$ such that: if $\Vol$ denotes either the Bergman volume or the K{\"a}hler-Einstein volume, then
\begin{align*}
\Vol\Big( \{ z \in \Omega : B_\Omega(z, z_0) \leq r\}\Big) \geq r^{2d}/C
\end{align*}
for all $r \in [0,\epsilon]$. Next let $\Vol_{K}$ denote the Kobayashi-Eisenman volume on $\Omega$ and $\Vol_{\Bb_d(0;1)}$ denote the Kobayashi-Eisenman volume on $\Bb_d(0;1)$. Then by definition 
\begin{align*}
\Vol_{K}(A) \geq \Vol_{\Bb_d(0;1)}(A)
\end{align*}
for all subsets $A\subset \Omega$. So from the well known explicit description of the Kobayashi-Eisenman volume for the ball, part (2), and by possibly modifying $C,\epsilon$ we can also assume that 
\begin{align*}
\Vol_K\Big( \{ z \in \Omega : B_\Omega(z, z_0) \leq r\}\Big) \geq r^{2d}/C
\end{align*}
for all $r \in [0,\epsilon]$.

\subsection{Invariant metrics near a strongly pseudoconvex point}

We will use the following well known facts about invariant metrics near a strongly pseudoconvex point. 

\begin{theorem}\label{thm:local} Suppose $\Omega \subset \Cb^d$ is a bounded domain. Assume that $\partial \Omega$ is $C^2$ and strongly pseudoconvex in a neighborhood of $\xi \in \partial \Omega$. Then there exists an neighborhood $U$ of $\xi$ in $\overline{\Omega}$ and $C > 0$ such that:
\begin{enumerate}
\item $k_\Omega$ and $g_B$ are $C$-bi-Lipshitz to each other on $U \cap \Omega$,
\item $k_\Omega(x;v) \geq C^{-1}\norm{v}\delta_\Omega(x)^{-1/2}$ for all $x \in U \cap \Omega$ and $v \in \Cb^d$, and
\item $g_B$ has negative sectional curvature on $U \cap \Omega$.

\end{enumerate}
\end{theorem}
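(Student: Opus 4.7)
My plan is to reduce everything to the globally smoothly bounded strongly pseudoconvex case and then invoke the classical asymptotic analyses of Graham, Fefferman, and Klembeck. Since the hypothesis is only local near $\xi$, the first move is to construct a bounded $C^2$ strongly pseudoconvex domain $D \subset \Cb^d$ together with a neighborhood $V$ of $\xi$ in $\Cb^d$ so that $\xi \in \partial D$ and $D \cap V = \Omega \cap V$. Such a $D$ is produced by a standard $C^2$ bumping: one takes a small patch of $\partial \Omega$ around $\xi$ (where by hypothesis the Levi form is positive definite) and glues it onto a large ball, using a strictly plurisubharmonic defining function to guarantee strong pseudoconvexity throughout $\partial D$.

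\textbf{Localization.} With $D$ in hand, one compares $k_\Omega, g_B^\Omega$ with $k_D, g_B^D$ on a neighborhood of $\xi$. For the Kobayashi metric this is the classical localization principle of Royden: the existence of a local holomorphic peak function at $\xi$ (which is automatic at a strongly pseudoconvex $C^2$ boundary point) gives a neighborhood $U_1$ of $\xi$ and a constant $C_1$ with $k_\Omega(x;v) \geq C_1^{-1} k_D(x;v)$ on $U_1 \cap \Omega = U_1 \cap D$, while the decreasing property yields the reverse inequality. For the Bergman metric one uses the Diederich--Herbort-type comparison (based ultimately on Ohsawa's $L^2$ extension theorem and weighted $\bar\partial$ estimates), which produces a smaller neighborhood $U_2$ of $\xi$ on which $g_B^\Omega$ and $g_B^D$ are bi-Lipschitz equivalent. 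After this reduction it suffices to establish (1)--(3) for the globally smooth strongly pseudoconvex domain $D$ on a neighborhood of $\xi$.

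\textbf{The smooth strongly pseudoconvex case.} For $D$ with smooth strongly pseudoconvex boundary, part (2) is Graham's lower bound, obtained by embedding a one-dimensional holomorphic disk into $D$ and using a local plurisubharmonic peak function to bound the derivative at the center. Fefferman's asymptotic expansion of the Bergman kernel, together with its first two derivatives, gives precise two-sided estimates $b_D(x;v) \asymp \|v\| \delta_D(x)^{-1/2}$ in the complex normal direction and $b_D(x;v) \asymp \|v\| \delta_D(x)^{-1}$ in the complex tangential direction; coupled with the matching upper bound on $k_D$ obtained from Fornaess's embedding of $D$ as a closed submanifold in a convex domain and the decreasing property, this yields the bi-Lipschitz equivalence of $k_D$ and $g_B^D$, proving (1). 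For (3) one invokes Klembeck's theorem: the sectional curvatures of $g_B^D$ at $x$ converge uniformly as $x \to \xi$ to those of the Bergman metric on $\Bb_d$, which are strictly negative (the holomorphic sectional curvature being the constant $-2/(d+1)$), so $g_B^D$ has negative sectional curvature on some $U \cap D$.

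\textbf{Main obstacle.} The conceptually delicate step is the Bergman localization: unlike the Kobayashi metric, the Bergman kernel is a global $L^2$ object, so comparability of $g_B^\Omega$ and $g_B^D$ near $\xi$ cannot be obtained by a bare bumping argument and requires non-trivial $\bar\partial$-techniques (Ohsawa extension, or the weighted estimates of Diederich--Herbort). Once this localization is in hand, the rest of the proof is an assembly of Graham's, Fefferman's, and Klembeck's well-known asymptotic results applied to the smooth model $D$.
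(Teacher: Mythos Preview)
Your strategy is sound and close in spirit to the paper's, but the execution differs in an instructive way. Both arguments localize the Kobayashi metric via a peak-function/Forstneri\v{c}--Rosay type lemma and localize the Bergman metric via a Diederich--Fornaess--Herbort result, so the ``main obstacle'' you identify is handled the same way. The divergence is in how one then treats the local model. The paper does \emph{not} bump $\Omega$ to a global strongly pseudoconvex domain; instead it uses a Narasimhan-type local biholomorphism $\varphi$ carrying a neighborhood of $\xi$ onto a \emph{convex} domain $\Cc$ which is strongly convex near $\varphi(\xi)$. On $\Cc$ the paper invokes Frankel's theorem (Bergman and Kobayashi metrics are uniformly bi-Lipschitz on any bounded convex domain) for part (1), and Graham's explicit two-sided estimate $k_{\Cc}(x;v)\asymp \|v\|/\delta_{\Cc}(x;v)$ together with strong convexity for part (2). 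Part (3) is attributed directly to Kim--Yu. This route is lighter than yours: Frankel's result and Graham's convex estimate are elementary compared to Fefferman's boundary asymptotics, and they require no smoothness beyond what you already have.

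Two small points you should fix in your version. First, your asymptotics are swapped: on a strongly pseudoconvex domain the metric blows up like $\delta^{-1}$ in the complex \emph{normal} direction and like $\delta^{-1/2}$ in the complex \emph{tangential} direction, not the other way around. Second, and more substantively, Fefferman's asymptotic expansion of the Bergman kernel requires $C^\infty$ boundary, but your bumped domain $D$ agrees with $\Omega$ near $\xi$ and therefore is only $C^2$ there; so as written Fefferman does not apply at the very point where you need it. You can repair this either by citing the $C^2$-level estimates (e.g.\ Diederich's bounds on the Bergman kernel, or the Kim--Yu $C^2$ version of Klembeck that the paper uses for part (3)), or by abandoning the Fefferman route and using the paper's convexification-plus-Frankel argument, which works at $C^2$ regularity without further fuss.
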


\begin{proof} Fix open neighborhoods $V_2 \Subset V_1$ of $\xi$ such that there exist a holomorphic embedding $\varphi: V_1 \rightarrow \Cb^d$ with $\varphi(V_2 \cap \Omega)$ a convex domain which is strongly convex near $\varphi(\xi)$.

By~\cite[Theorem 2.1]{FR1987} there exists a neighborhood $V_3$ of $\xi$ such that $V_3 \Subset V_2$ and
\begin{align*}
k_\Omega(x;v) \leq k_{V_2 \cap \Omega}(x;v) \leq 2 k_\Omega(x;v)
\end{align*}
for all $x \in V_3$ and $v \in \Cb^d$ (notice that the first inequality is by definition). Further, by~\cite[Theorem 1]{DFH1984}  there exists $C_0 > 1$ such that
\begin{align*}
\frac{1}{C_0} b_{\Omega \cap V_2}(x;v) \leq b_\Omega(x;v) \leq C_0b_{\Omega \cap V_2}(x;v)
\end{align*}
for all $x \in V_3$ and $v \in \Cb^d$. 

Now since $V_2 \cap \Omega$ is biholomorphic to a convex domain, a result of Frankel~\cite{F1991} implies that $b_{\Omega \cap V_2}$ and $k_{\Omega \cap V_2}$ are $C_1$-bi-Lipschitz to each other for some $C_1 >1$. So we see that $k_\Omega$ and $g_B$ are $C$-bi-Lipshitz to each other on $V_3 \cap \Omega$ for some $C>1$. 

Given a domain $\Oc \subset \Cb^d$, $x \in \Oc$, and nonzero $v \in \Cb^d$ define
\begin{align*}
\delta_{\Oc}(x;v) = \inf\{ \norm{y-x} : y \in \partial \Omega \cap (x+\Cb v)\}.
\end{align*}
Since $\Cc=\varphi(V_2 \cap \Omega)$ is convex, a result of Graham~\cite{G1990,G1991} says that 
\begin{align*}
\frac{\norm{v}}{2\delta_{\Cc}(x;v)} \leq k_{\Cc}(x;v) \leq \frac{\norm{v}}{\delta_{\Cc}(x;v)}
\end{align*}
for all $x \in \Cc$ and $v \in \Cb^d$. Then, since $\Cc$ is strongly convex at $\varphi(\xi)$, there exists a neighborhood $W$ of $\varphi(\xi)$ and some $C_2 > 0$ such that 
\begin{align*}
C_2 \frac{\norm{v}}{\delta_{\Cc}(x)^{1/2}} \leq k_{\Cc}(x;v)
\end{align*}
for all $x \in W$ and $v \in \Cb^d$. Since $V_2 \Subset V_1$, the map $\varphi:V_1 \rightarrow \Cb^d$ is bi-Lipschitz on $V_2$, so  by possibly shrinking $V_3$ and increasing $C$ we can assume that 
\begin{align*}
\frac{1}{C} \frac{\norm{v}}{\delta_{\Omega}(x)^{1/2}} \leq k_{\Omega}(x;v)
\end{align*}
for all $x \in V_3$ and $v \in \Cb^d$. 

Finally, part (3) follows  from~\cite[Theorem 1]{KY1996}.

\end{proof}

\subsection{Completeness of the Bergman metric}

We will use the following  fact about the Bergman metric:

\begin{theorem}[Ohsawa~\cite{O1981}] If $\Omega \subset \Cb^d$ is a bounded pseudoconvex domain with $C^1$ boundary, then the Bergman metric is a complete Riemannian metric on $\Omega$.
\end{theorem}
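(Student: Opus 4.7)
My plan is to derive Ohsawa's theorem from Kobayashi's classical completeness criterion combined with Hörmander's $L^2$-estimates for $\bar\partial$ on pseudoconvex domains.

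The first step is to recall (and if needed prove) Kobayashi's criterion: the Bergman metric of $\Omega$ is complete provided that for every $z_0 \in \Omega$ and every sequence $z_n \to \partial \Omega$, one has
\begin{equation*}
\sup_{f \in H^2(\Omega),\ \norm{f}_{L^2} \leq 1} \frac{\abs{f(z_n)}^2}{K_\Omega(z_n,z_n)} \longrightarrow 0,
\end{equation*}
equivalently the Bergman kernel $K_\Omega(z_n,z_n)$ grows fast enough at $\partial\Omega$ to dominate values of every fixed $L^2$ holomorphic function. The proof of this criterion is a reproducing-kernel argument, so the real content is producing the required growth of $K_\Omega(z_n,z_n)$.

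The second step is to build, for each boundary point $\xi \in \partial \Omega$, an auxiliary plurisubharmonic weight $\varphi_\xi$ on $\Omega$ that blows up like $-\log\delta_\Omega$ as $z \to \xi$ and whose Levi form controls the geometry near $\xi$. Here is where the $C^1$ hypothesis enters: using a $C^1$ defining function near $\xi$, one writes $\Omega$ locally as a $C^1$ graph, picks a smoothed exterior ball-type barrier, and verifies that a weight of the form $\varphi_\xi = 2d \log\norm{z-\xi} + \chi \cdot \psi$ (with $\chi$ a cutoff and $\psi$ a plurisubharmonic correction coming from pseudoconvexity) has the desired positivity. Since $\Omega$ is pseudoconvex, Hörmander's $L^2$-theorem then provides solutions of $\bar\partial u = g$ with the bound $\int_\Omega \abs{u}^2 e^{-\varphi_\xi} \leq \int_\Omega \abs{g}^2_{i\partial\bar\partial \varphi_\xi} e^{-\varphi_\xi}$.

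The third step is the peak-function construction. For $z_n \to \xi$, let $\chi_n$ be a cutoff equal to $1$ on a small ball around $z_n$ and supported slightly larger; solve $\bar\partial u_n = \bar\partial \chi_n$ with the weight $\varphi_\xi$ chosen to be very singular at $z_n$. The function $h_n := \chi_n - u_n$ is holomorphic on $\Omega$, equals $1$ at $z_n$, and Hörmander's estimate combined with the pointwise vanishing forced by the singular weight gives $\norm{h_n}_{L^2(\Omega)} \to 0$. This produces $K_\Omega(z_n,z_n) \geq \abs{h_n(z_n)}^2/\norm{h_n}_{L^2}^2 \to \infty$ at the rate needed for the Kobayashi criterion, and since this rate depends only on the local geometry at $\xi$, the convergence $\abs{f(z_n)}^2/K_\Omega(z_n,z_n) \to 0$ holds uniformly in $f$ in the unit $L^2$ ball.

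The main obstacle will be the plurisubharmonic weight construction under only $C^1$ boundary regularity, since the standard constructions (via the Levi form or via signed-distance) require at least $C^2$. I would deal with this by working with an approximation of $\partial\Omega$ by smooth pseudoconvex hypersurfaces from inside, proving the required peak estimates uniformly in the approximation, and then passing to the limit; the $C^1$ regularity enters essentially only to guarantee that $-\log\delta_\Omega$ is comparable to the smoothed weights near $\xi$ and that the interior cone condition holds, which in turn forces the $L^2$ estimate for $u_n$ to decay as $z_n \to \xi$.
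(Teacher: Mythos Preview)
First, note that the paper does not prove this statement at all: it is quoted as a known result of Ohsawa with a bare citation, followed only by a remark pointing to the later extensions to hyperconvex domains by Herbort and by B{\l}ocki--Pflug. There is no argument in the paper to compare your proposal against.

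That said, your proposal has a genuine gap at the very first step. The displayed ``criterion'' is vacuous: for every $z\in\Omega$ one has
\[
\sup_{\norm{f}_{L^2}\le 1}\frac{\abs{f(z)}^2}{K_\Omega(z,z)}=1,
\]
the supremum being attained by the normalized reproducing kernel $f=K_\Omega(\cdot,z)/\sqrt{K_\Omega(z,z)}$. Kobayashi's actual sufficient condition is the \emph{pointwise} statement you allude to in words but then conflate with the uniform one: for each fixed $f\in A^2(\Omega)$ one has $\abs{f(z_n)}^2/K_\Omega(z_n,z_n)\to 0$ as $z_n\to\partial\Omega$. Your peak--function construction, if it goes through, yields only $K_\Omega(z_n,z_n)\to\infty$ (Bergman exhaustiveness), and this alone does \emph{not} imply Kobayashi's condition; there are bounded domains that are Bergman--exhaustive but not Bergman--complete (examples due to Zwonek among Hartogs domains). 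Closing the gap requires either an additional argument controlling $\abs{f(z_n)}$ for arbitrary $f$ against the growth rate you obtain for the kernel, or---as Ohsawa in fact does---bypassing the kernel and bounding the infinitesimal Bergman metric from below directly. In either approach the $C^1$ issue you identify as the ``main obstacle'' is indeed the crux, and the approximation-from-inside sketch you give does not explain why the relevant estimates pass to the limit.
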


\begin{remark} It is also known that the Bergman metric is complete on the more general class of hyperconvex domains, see~\cite{H1999} and~\cite{BP1998b}.\end{remark}

\subsection{A local version of E. Cartan's fixed point theorem}

E. Cartan showed that a compact group $G$ acting by isometries on $(X,g)$ a complete simply connected Riemannian manifold with non-positive sectional curvature always has a fixed point. One proof, see for instance~\cite[p. 21]{E1996}, uses the following lemma: if $K \subset X$ is compact, then the function
\begin{align*}
f(x) = \sup\{ d(x,k) : k \in K\}
\end{align*}
has a unique minimum in $X$. In this section we observe a local version of this lemma which will allow us to show that a certain compact subgroup has a fixed point in the proof of Theorem~\ref{thm:main}.

Given a complete Riemannian manifold $(X,g)$, $x_0 \in X$, and $R >0$ let $B_{(X,g)}(x_0,R)$ denote the open metric ball of radius $R$ centered at $x_0$. 

\begin{proposition}\label{prop:COM} Suppose $(X,g)$ is a complete Riemannian manifold, $x_0 \in X$, $R>0$, the metric $g$ has non-positive sectional curvature on $B_{(X,g)}(x_0,8R)$, and $g$ has injectivity radius at least $16R$ at each point in $B_{(X,g)}(x_0,8R)$. If $K \subset B_{(X,g)}(x_0,R)$ is compact, then the function
\begin{align*}
f(x) = \sup\{ d(x,k) : k \in K\}
\end{align*}
has a unique minimum in $X$.
\end{proposition}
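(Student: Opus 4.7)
The plan is to imitate the classical proof of E.\ Cartan's fixed point theorem, restricting attention to where the hypotheses supply Cartan--Hadamard-style geometry, and being careful that every geodesic and midpoint I construct stays inside $B_{(X,g)}(x_0,8R)$. For existence, since $K\subset B_{(X,g)}(x_0,R)$ is compact, $f(x_0)=\max_{k\in K}d(x_0,k)<R$, and for any $x\in X$ and any $k\in K$ the triangle inequality gives $f(x)\geq d(x,k)\geq d(x,x_0)-R$. In particular $f(x)>f(x_0)$ whenever $d(x,x_0)\geq 3R$, so any minimizer lies in $\overline{B_{(X,g)}(x_0,3R)}$; this closed ball is compact by Hopf--Rinow, and continuity of $f$ produces a global minimum.

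For uniqueness, suppose $p_1\neq p_2$ are both global minima. Since $f(p_i)<R$, for any $k\in K$ one has $d(p_i,x_0)\leq d(p_i,k)+d(k,x_0)<2R$, so $d(p_1,p_2)<4R$, well below the injectivity radius $16R$ at $p_1$. Thus there is a unique constant-speed minimizing geodesic $\gamma\colon [0,1]\to X$ from $p_1$ to $p_2$, and it lies in $B_{(X,g)}(p_1,4R)\subset B_{(X,g)}(x_0,6R)$. Let $m=\gamma(1/2)$ be the midpoint.

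The crux is the local ``CN'' inequality: for every $k\in K$,
\[
d(m,k)^2 \;\leq\; \tfrac{1}{2}\bigl(d(p_1,k)^2+d(p_2,k)^2\bigr)\;-\;\tfrac{1}{4}d(p_1,p_2)^2. \qquad (\star)
\]
To prove $(\star)$, fix $k\in K$ and note that every $y\in\gamma([0,1])$ satisfies $d(y,k)<7R<16R$, so $y$ lies within the injectivity radius of $k$, and the minimizing geodesic from $k$ to $y$ remains in $B_{(X,g)}(k,7R)\subset B_{(X,g)}(x_0,8R)$ where the sectional curvature is nonpositive. Standard Jacobi field (Rauch) comparison then yields the Hessian bound $\operatorname{Hess}\bigl(\tfrac{1}{2}d(\,\cdot\,,k)^2\bigr)\geq g$ along $\gamma$, so $\phi(t):=d(\gamma(t),k)^2$ satisfies $\phi''(t)\geq 2\,d(p_1,p_2)^2$ on $[0,1]$, which integrates to $(\star)$. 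Taking the supremum over $k\in K$ of both sides of $(\star)$ and using $\sup(a+b)\leq\sup a+\sup b$ gives
\[
f(m)^2 \;\leq\; \tfrac{1}{2}\bigl(f(p_1)^2+f(p_2)^2\bigr)-\tfrac{1}{4}d(p_1,p_2)^2 \;<\; f(p_1)^2,
\]
contradicting minimality of $p_1$.

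The only real obstacle is geometric bookkeeping: I must verify that every geodesic I use---from $p_1$ to $p_2$, and from each $k\in K$ to each point on that geodesic---remains inside $B_{(X,g)}(x_0,8R)$, so that the curvature hypothesis applies, and inside the injectivity radius of its endpoints, so that $d(\,\cdot\,,k)^2$ is smooth and the Rauch comparison is valid. The constants $8R$ and $16R$ in the hypothesis are chosen precisely to provide this margin.
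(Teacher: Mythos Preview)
Your argument is correct. The geometric bookkeeping is clean: minima lie in $B_{(X,g)}(x_0,2R)$ (you have $3R$, which is also fine), the segment between two putative minima stays in $B_{(X,g)}(x_0,6R)$, and the geodesics from points of $K$ to points on this segment have length at most $7R$ and so stay inside $B_{(X,g)}(x_0,8R)$, where the curvature and injectivity-radius hypotheses apply.

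The paper's proof and yours share the same strategy---show that the midpoint of a segment between two minima has strictly smaller $f$-value---but use different comparison inequalities to do it. The paper applies the Rauch/Toponogov law of cosines $a^2+b^2-2ab\cos\theta\leq c^2$ to the triangle with vertices $m,k,y$ (where $y$ is the endpoint making an obtuse angle at $m$), yielding $d(m,k)<d(y,k)\leq f(y)$. You instead use the Hessian lower bound $\mathrm{Hess}\bigl(\tfrac12 d(\cdot,k)^2\bigr)\geq g$ to derive the Bruhat--Tits (CN) inequality $(\star)$, which is slightly more quantitative: it tells you $f(m)^2\leq f(p_1)^2-\tfrac14 d(p_1,p_2)^2$. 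Both are standard routes to Cartan's fixed-point theorem; yours avoids the angle argument at the midpoint at the cost of invoking smoothness of $d(\cdot,k)^2$, which the injectivity-radius hypothesis indeed guarantees.
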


The following proof is nearly identical to the proof of the Lemma on p. 21 in~\cite{E1996}, but we provide the details for the reader's convenience. 

\begin{proof}
When $\alpha \in [0,4]$, every two points in $B_{(X,g)}(x_0,\alpha R)$ are joined by a unique geodesic and this geodesic is contained in $B_{(X,g)}(x_0,2 \alpha R)$. 

Since $g$ is non-positively curved on $B_{(X,g)}(x_0, 8R)$ and has injectivity radius at least $16R$ at each point in $B_{(X,g)}(x_0,8R)$ the Rauch comparison theorem implies (see \cite[p. 73]{H2001}): if $\Tc$ is a geodesic triangle contained in $B_{(X,g)}(x_0,4R)$ with side lengths $a,b,c$ then
\begin{align}
\label{eq:law_of_cosines}
a^2 + b^2 -2ab \cos \theta \leq c^2
\end{align}
where $\theta$ is the angle at the vertex opposite to the side of length $c$. 

Since $f$ is a proper continuous function there exists at least one minimum. Since $f(x) > R$ when $x \in X \setminus B_{(X,g)}(x_0,2R)$ and $f(x_0) \leq R$ any minimum of $f$ is in $B_{(X,g)}(x_0,2R)$. 

Suppose for a contradiction that there exists two distinct minimum points $x,y$ of $f$. Let $\sigma:[0,T] \rightarrow X$ denote the unique geodesic with $\sigma(0)=x$ and $\sigma(T)=y$. Let $m = \sigma(T/2)$. Then consider some $k \in K$ and let $\gamma: [0,S] \rightarrow X$ denote the unique geodesic in $X$ with $\gamma(0)=m$ and $\gamma(S) = k$. Since 
\begin{align*}
\angle_m(-\sigma^\prime(T/2), \gamma^\prime(0)) + \angle_m(\sigma^\prime(T/2), \gamma^\prime(0))=\pi,
\end{align*}
by relabelling $x,y$ we can assume that $\theta:=\angle_m(\sigma^\prime(T/2), \gamma^\prime(0)) \geq \pi/2$. Then Equation~\eqref{eq:law_of_cosines} implies that 
\begin{align*}
d(y,k)^2 \geq d(y,m)^2 + d(m,k)^2 - 2d(y,m)d(m,k)\cos \theta > d(m,k)^2.
\end{align*}
So $d(m,k) < d(y,k) \leq f(y)$. Since $k \in K$ was arbitrary and $K$ is compact, we then have $f(m) < f(y)$ which is a contradiction. 
\end{proof}

\section{An estimate for the Bergman distance} 

\begin{theorem}\label{thm:GP} Suppose $\Omega \subset \Cb^d$ is a bounded pseudoconvex domain. Assume that $\partial \Omega$ is $C^2$ and strongly pseudoconvex in a neighborhood of $\xi \in \partial \Omega$. If $z_0 \in \Omega$ and $\epsilon_0 > 0$, then there exists $\epsilon \in (0,\epsilon_0)$ and $R >0$ such that 
\begin{align*}
B_\Omega(z,w) \geq B_\Omega(z,z_0)+B_\Omega(z_0,w) -R
\end{align*}
for all $z,w \in \Omega$ with $\norm{z-\xi} < \epsilon$ and $\norm{w-\xi} > \epsilon$. 
\end{theorem}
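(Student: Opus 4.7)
The target inequality is equivalent to the uniform upper bound
$$(z \mid w)_{z_0} := \tfrac{1}{2}\bigl(B_\Omega(z,z_0) + B_\Omega(z_0,w) - B_\Omega(z,w)\bigr) \leq R/2$$
on the Bergman Gromov product. The plan is to show that, along every $C^1$ path $\gamma:[0,1]\to\Omega$ from $z$ to $w$, there exists an intermediate point $p = \gamma(t^*)$ with $B_\Omega(p, z_0) \leq R/2$; then the triangle inequality $L_{g_B}(\gamma) \geq B_\Omega(z, p) + B_\Omega(p, w) \geq B_\Omega(z, z_0) + B_\Omega(z_0, w) - 2B_\Omega(p, z_0)$ yields the conclusion upon infimizing over $\gamma$.

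First I would apply Theorem \ref{thm:local} and trace through its proof to fix a neighborhood $U \Subset V_1$ of $\xi$ and an embedding $\varphi: V_1 \to \Cb^d$ with $\Cc := \varphi(V_2 \cap \Omega)$ convex and strongly convex near $\varphi(\xi)$, together with the bi-Lipschitz equivalence $g_B \asymp k_\Omega \asymp \varphi^* k_\Cc$ on $U \cap \Omega$ and Graham's sharp bound $k_\Cc(u;v) \geq \norm{v}/(2\delta_\Cc(u;v))$. Shrink so that $z_0 \notin U$, and pick $\epsilon \in (0, \epsilon_0)$ with $\Bb_d(\xi, 2\epsilon) \cap \Omega \subset U$. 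Then pick $\delta_0 > 0$ so that the compact set $K := \{x \in \Omega : \delta_\Omega(x) \geq \delta_0\}$ contains $z_0$, and set $C_0 := \sup_{p \in K} B_\Omega(p, z_0)$, which is finite since $g_B$ is complete by Ohsawa's theorem and $K$ is compact.

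The main dichotomy is: if $\sup_t \delta_\Omega(\gamma(t)) \geq \delta_0$, the intermediate value theorem applied to $t \mapsto \delta_\Omega(\gamma(t))$ produces $p \in \gamma \cap K$, giving the bound with $R = 2C_0$. Otherwise $\gamma$ is trapped in the boundary layer $\{\delta_\Omega < \delta_0\}$, and in this case I would integrate Graham's estimate in the complex normal direction: on $\Cc$, $k_\Cc(u; v) \geq |v_N|/(2\delta_\Cc(u))$ (since the normal line to $\partial\Cc$ at $\varphi(\xi)$ hits $\partial\Cc$ at Euclidean distance comparable to $\delta_\Cc$), and $|v_N|$ is comparable to $|d\delta_\Cc(\gamma)/dt|$. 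Pulling back by $\varphi$ and using $g_B \asymp k_\Omega$, this yields
$$L_{g_B}(\gamma|_{[a,b]}) \geq \tfrac{1}{2}\bigl|\log\delta_\Omega(\gamma(b)) - \log\delta_\Omega(\gamma(a))\bigr|$$
for any subinterval on which $\gamma$ stays in $U$. Combining with the standard boundary asymptotic $B_\Omega(x, z_0) = -\tfrac{1}{2}\log\delta_\Omega(x) + O_U(1)$ for $x \in U \cap \Omega$, and identifying a transit point $q$ where $\gamma$ crosses $\partial U$ (a compact shell on which $B_\Omega(\cdot, z_0)$ is uniformly bounded), delivers the required estimate.

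The main obstacle is the boundary-layer case where $w$ itself is close to $\partial\Omega$, possibly at a point far from $\xi$ that is only assumed pseudoconvex (not strongly so). Graham's estimate only controls the normal component of $\gamma'$, so the bound $L \geq \tfrac{1}{2}|\log\delta_\Omega(z) - \log\delta_\Omega(q)|$ must be combined with a symmetric estimate for the remaining portion of $\gamma$ (from the transit point $q$ out to $w$) in order to simultaneously absorb both $B_\Omega(z, z_0) \sim -\tfrac{1}{2}\log\delta_\Omega(z)$ and $B_\Omega(z_0, w)$. The key input resolving this is that, regardless of how $w$ approaches $\partial\Omega$, any path $\gamma$ from $z$ to $w$ that fails to enter $K$ must still cross the compact shell $\partial U \cap \overline\Omega$, producing a pivot point where $B_\Omega(\cdot, z_0)$ is uniformly bounded and the triangle argument can be closed.
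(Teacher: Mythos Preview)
Your pivot-point argument in the boundary-layer case does not close. The set $\partial U \cap \Omega$ is not compact in $\Omega$: it contains points arbitrarily close to $\partial\Omega$, and at such points $B_\Omega(\cdot,z_0)$ is unbounded. So crossing $\partial U$ does not by itself produce a point $q$ with $B_\Omega(q,z_0)$ uniformly controlled, and your triangle inequality $L(\gamma)\geq B_\Omega(z,z_0)+B_\Omega(z_0,w)-2B_\Omega(q,z_0)-O(1)$ gives nothing. Your attempt to compensate via the normal-direction estimate $L_{g_B}(\gamma|_{[0,t_q]})\geq \tfrac{1}{2}|\log\delta_\Omega(z)-\log\delta_\Omega(q)|$ also fails on the constants: the bi-Lipschitz comparison $g_B\asymp k_\Omega$ only yields $L_{g_B}\geq \tfrac{1}{2C}|\log\delta_\Omega(z)-\log\delta_\Omega(q)|$, while the upper bound on $B_\Omega(z,z_0)$ is of the form $\beta\log(1/\delta_\Omega(z))+O(1)$ with $\beta$ possibly much larger than $\tfrac{1}{2C}$. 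Without a two-sided asymptotic $B_\Omega(x,z_0)=-\tfrac{1}{2}\log\delta_\Omega(x)+O(1)$ (which is not available here under merely bi-Lipschitz equivalence), these scales do not match and the argument cannot absorb $B_\Omega(z,z_0)$.

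The paper's proof avoids this entirely by exploiting the \emph{tangential} lower bound $b_\Omega(x;v)\geq C^{-1}\|v\|\,\delta_\Omega(x)^{-1/2}$ from Theorem~\ref{thm:local}, which controls the full velocity, not just its normal component. Along a unit-speed Bergman geodesic $\sigma$ from $z$ to $w$ this gives $\|\sigma'(t)\|\leq C\,\delta_\Omega(\sigma(t))^{1/2}$, so the Euclidean arc length is $\leq C\int\delta_\Omega(\sigma)^{1/2}\,dt$. Since $\sigma$ must traverse Euclidean distance $\epsilon$ to exit the ball $\Bb_d(z;\epsilon)$, and since the upper bound $B_\Omega(z_0,\sigma(t))\leq\alpha+\beta\log(1/\delta_\Omega(\sigma(t)))$ forces $\delta_\Omega(\sigma(t))$ to decay exponentially in $|t-\tau|$ (where $\tau$ maximizes $\delta_\Omega\circ\sigma$ on that portion), one gets a definite lower bound $\delta_\Omega(\sigma(\tau))\geq c(\epsilon)$. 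This produces the required pivot point $\sigma(\tau)$ with bounded $B_\Omega(\sigma(\tau),z_0)$ \emph{inside} the strongly pseudoconvex region, without ever needing control near $\partial U$ or outside $U$.
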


\begin{remark} This says that a point $z$ near $\xi$ and point $w$ far away from $\xi$ can be joined by a path that passes through $z_0$ and is length minimizing up to an error of $R$. \end{remark}

The following argument is based on the proof of ~\cite[Lemma 36]{K2005b} which establishes a similar estimate for the Kobayashi distance.

\begin{proof} By Theorem~\ref{thm:local} there exists an neighborhood $U$ of $\xi$ and some $C > 1$ such that 
\begin{align*}
\frac{1}{C} k_\Omega(x;v) \leq b_\Omega(x;v) \leq C k_\Omega(x;v)
\end{align*}
and
\begin{align*}
\frac{1}{C} \frac{\norm{v}}{\delta_\Omega(x)^{1/2}} \leq b_\Omega(x;v)
\end{align*}
for all $x \in \Omega \cap U$ and $v \in  \Cb^d$. 

By definition
\begin{align*}
k_\Omega(x;v) \leq \frac{\norm{v}}{\delta_\Omega(x)}
\end{align*}
and so
\begin{align*}
b_\Omega(x;v) \leq C\frac{\norm{v}}{\delta_\Omega(x)}
\end{align*}
for $x \in U \cap \Omega$ and $v \in \Cb^d$. Then since $\partial \Omega$ is $C^2$ near $\xi$ one can consider parametrizations of inward pointing normal lines to show that there exists $\alpha, \beta > 0$ and a neighborhood $V \subset U$ of $\xi$ such that
\begin{align*}
B_\Omega(z_0, z) \leq \alpha + \beta \log \frac{1}{\delta_\Omega(z)}
\end{align*}
for all $z \in V \cap \Omega$. 

Now fix $\epsilon \in (0,\epsilon_0)$ such that 
\begin{align*}
\{ z \in \Cb^d : \norm{z-\xi} < 2\epsilon \} \subset V.
\end{align*}
Consider points $z,w \in \Omega$ with $\norm{z-\xi} < \epsilon$ and $\norm{w-\xi} > \epsilon$. Let $\sigma: [0,T] \rightarrow \Omega$ be a geodesic (with respect to the Bergman distance) joining $z$ and $w$. Define 
\begin{align*}
T_0 = \max\left\{ t \in [0,T] : \sigma([0,t]) \subset \overline{\Bb_d(z; \epsilon)} \right\}.
\end{align*}
Then let $\tau \in [0,T_0]$ be such that 
\begin{align*}
\delta_\Omega(\sigma(\tau)) = \max \{ \delta_\Omega(\sigma(t)) : t \in [0,T_0]\}.
\end{align*}
Now for $t \in [0,T_0]$ we have 
\begin{align*}
\abs{t-\tau} &= B_\Omega(\sigma(t), \sigma(\tau)) \leq B_\Omega(\sigma(t), z_0)+B_\Omega(z_0, \sigma(\tau))\\
& \leq 2\alpha + \beta \log \frac{1}{\delta_\Omega(\sigma(t))\delta_\Omega(\sigma(\tau))}
\end{align*}
So 
\begin{align*}
\delta_\Omega(\sigma(t)) \leq \sqrt{\delta_\Omega(\sigma(t))\delta_\Omega(\sigma(\tau))} \leq 
\exp \left( \frac{ -\abs{t-\tau}+2\alpha}{2\beta} \right). 
\end{align*}

Now fix $M > 0$ such that 
\begin{align*}
\int_{M}^{\infty} \exp \left( \frac{ -r+2\alpha}{4\beta} \right) dr < \epsilon/(4C).
\end{align*}
Then
\begin{align*}
\epsilon = & \norm{\sigma(0)-\sigma(T_0)} \leq \int_0^{T_0} \norm{\sigma^\prime(t)} dt \leq C \int_0^{T_0} \delta_\Omega(\sigma(t))^{1/2} dt 
\end{align*}
since $b_\Omega(\sigma(t); \sigma^\prime(t))=1$. Then 
\begin{align*}
\epsilon & \leq C \int_{[0,T_0] \cap (\tau-M, \tau +M)} \delta_\Omega(\sigma(t))^{1/2} dt + C \int_{[0,T_0] \cap (\tau-M, \tau +M)^c} \delta_\Omega(\sigma(t))^{1/2} dt \\
& \leq 2CM \delta_\Omega(\sigma(\tau))^{1/2} +  2C\int_{M}^\infty  \exp \left( \frac{ -r+2\alpha}{4\beta} \right) dr \\
& \leq 2CM \delta_\Omega(\sigma(\tau))^{1/2} +  \epsilon/2.
\end{align*}
So 
\begin{align*}
\delta_\Omega(\sigma(\tau))^{1/2} \geq \epsilon/(4CM).
\end{align*}
Then 
\begin{align*}
B_\Omega(z,w) &=B_\Omega(z,\sigma(\tau))+B_\Omega(\sigma(\tau),w) \geq B_\Omega(z,z_0)+B_\Omega(z_0,w) - 2B_\Omega(z_0,\sigma(\tau)) \\
& \geq B_\Omega(z,z_0)+B_\Omega(z_0,w) - R
\end{align*}
where
\begin{align*}
R = 2\alpha + 4\beta \log \frac{4CM}{\epsilon}.
\end{align*}
Notice that $R$ does not depend on $z$ or $w$, so the proof is complete. 
\end{proof}

\section{Proof of Theorem~\ref{thm:main}}

For the rest of the section suppose that $\Omega \subset \Cb^d$ is a bounded pseudoconvex domain with $C^{2}$ boundary and $\Gamma \leq \Aut(\Omega)$ is a discrete group acting freely on $\Omega$. Further assume that  $\Vol(\Gamma \backslash \Omega) < +\infty$ where $\Vol$ is either the Bergman volume, the K{\"a}hler-Einstein volume, or the Kobayashi-Eisenman volume.

By replacing $\Omega$ with an affine translate we can assume that $0 \in \Omega$, $\Omega \subset \Bb_d(0;1)$, and $(1,0,\dots,0) \in \partial \Omega$. Then, since $\partial \Omega$ is $C^2$, there exists some $r\in (0,1)$ such that 
\begin{align*}
\Bb_d( (r,0,\dots,0); 1-r) \subset \Omega \subset \Bb_d(0;1).
\end{align*}

\begin{observation}\label{obs:strong_pc_exist} $\xi=(1,0,\dots,0)$ is a strongly pseudoconvex point of $\partial \Omega$. 
\end{observation}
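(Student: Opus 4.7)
The plan is to exhibit the Levi form of $\Omega$ at $\xi$ as bounded below by that of the unit ball at $\xi$, which is already strictly positive definite on the complex tangent space. Only the outer inclusion $\Omega \subset \Bb_d(0;1)$ is needed.

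First I would verify that the outward real normals of $\partial \Omega$ and $\partial \Bb_d(0;1)$ at $\xi$ coincide. This follows from $\overline{\Omega} \subset \overline{\Bb_d(0;1)}$ together with $\xi \in \partial\Omega \cap \partial\Bb_d(0;1)$: if the two outward unit normals differed, one could find a Euclidean direction $u$ at $\xi$ which is inward for $\Omega$ but outward for $\Bb_d(0;1)$, giving points $\xi + tu \in \Omega \setminus \overline{\Bb_d(0;1)}$ for small $t>0$ and contradicting the inclusion. In particular $T_\xi \partial \Omega = T_\xi \partial \Bb_d(0;1)$, and so the complex tangent spaces also agree.

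Next I would pick a $C^2$ defining function $\rho$ for $\Omega$ near $\xi$ with $\rho < 0$ on $\Omega$, normalized so that $\nabla \rho(\xi) = \nabla \rho_1(\xi)$, where $\rho_1(z) = \norm{z}^2 - 1$ is the standard defining function for the unit ball. The function $h := \rho - \rho_1$ then satisfies $h(\xi)=0$ and $\nabla h(\xi)=0$. On $\partial \Omega$ near $\xi$ we have $\rho = 0$ and $\rho_1 \leq 0$ (since $\partial\Omega \subset \overline{\Bb_d(0;1)}$), so $h \geq 0$ on $\partial \Omega$ with equality at $\xi$. Hence $h|_{\partial \Omega}$ has a local minimum at $\xi$; because $\nabla h(\xi)=0$, the intrinsic Hessian of $h|_{\partial\Omega}$ at $\xi$ agrees with the ambient real Hessian of $h$ restricted to $T_\xi \partial\Omega$, and is therefore positive semidefinite on this subspace.

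To convert this real-Hessian inequality into a Levi-form inequality, I would restrict to vectors of the form $\lambda v$ with $v \in T_\xi^{\Cb} \partial\Omega$ and $\lambda \in \Cb$ (all of which lie in $T_\xi \partial\Omega$). Writing the second-order Taylor expansion of a real $C^2$ function in complex coordinates, the inequality on the real Hessian of $h$ becomes
\begin{align*}
\Real\bigl(\lambda^2 A\bigr) + |\lambda|^2\bigl(L_\rho(\xi)(v,v) - L_{\rho_1}(\xi)(v,v)\bigr) \geq 0
\end{align*}
for all $\lambda \in \Cb$, where $A = \sum_{j,k} \partial_j \partial_k (\rho - \rho_1)(\xi)\, v_j v_k$ is the $(2,0)$-part. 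Averaging over $\lambda \in S^1$ kills the $\Real(\lambda^2 A)$ term and yields $L_\rho(\xi)(v,v) \geq L_{\rho_1}(\xi)(v,v) = \norm{v}^2$, which is strictly positive for $v \neq 0$. This proves that $\xi$ is strongly pseudoconvex.

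The only substantive point is the passage from real-Hessian semidefiniteness to Hermitian (Levi-form) positivity, handled by the $\lambda$-averaging trick; the rest is the standard Hopf-style comparison of defining functions against the supporting ball $\Bb_d(0;1)$. The inner-ball inclusion stated earlier is not needed for this observation, but it guarantees that $\xi$ really is a boundary point admitting a well-defined supporting picture, and will presumably be useful later in the paper.
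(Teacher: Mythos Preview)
Your argument is correct and is precisely the standard Levi-form comparison against the exterior tangent ball; the paper does not spell out a proof at all but simply invokes the inclusion $\Omega \subset \Bb_d(0;1)$ with $\xi \in \partial\Omega$ and cites \cite[Lemma 4.1]{GS2017}, which contains exactly the computation you have written. Your averaging-over-$S^1$ step to pass from real-Hessian semidefiniteness to Levi-form positivity is the usual device here, so there is no substantive difference in approach.
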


\begin{proof}
This is a simple consequence of the fact that $(1,0,\dots, 0) \in \partial \Omega$ and $\Omega \subset \Bb_d(0;1)$, see for instance~\cite[Lemma 4.1]{GS2017}.
\end{proof}

\begin{observation} Let $w_t = (t,0,\dots,0) \in \Cb^d$. Then there exists some $s_0 > 0$ such that $s_\Omega(w_t) \geq s_0$ for $t \in [r,1)$.
\end{observation}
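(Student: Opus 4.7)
The plan is to exhibit, for each $t \in [r, 1)$, a single injective holomorphic embedding of $\Omega$ into $\Bb_d(0;1)$ sending $w_t$ to $0$, whose image contains a uniformly sized Euclidean ball around $0$. The natural candidate is the M\"obius automorphism
\begin{align*}
\varphi_t(z_1, z') := \left(\frac{z_1 - t}{1 - t z_1},\ \frac{\sqrt{1-t^2}\, z'}{1 - t z_1}\right)
\end{align*}
of $\Bb_d(0;1)$, which sends $w_t$ to $0$ and fixes $\xi := (1, 0, \ldots, 0)$. Since $\Omega \subset \Bb_d(0;1)$, the restriction $\varphi_t|_\Omega$ is an injective holomorphic map into $\Bb_d(0;1)$ with $\varphi_t(w_t) = 0$, so by the definition of $s_\Omega$ it is enough to find $s_0 > 0$, independent of $t$, with $\Bb_d(0; s_0) \subset \varphi_t(\Omega)$.

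Because $\Bb_d(w_r; 1-r) \subset \Omega$, it suffices to show $\Bb_d(0; s_0) \subset \varphi_t(\Bb_d(w_r; 1-r))$. The main geometric observation I would use is that $\Bb_d(w_r; 1-r)$ is a horoball of $\Bb_d(0;1)$ based at $\xi$ (i.e., a Euclidean ball tangent to $\partial\Bb_d(0;1)$ at $\xi$), and since $\varphi_t$ is a Bergman isometry fixing $\xi$, it permutes the horoballs based at $\xi$. Hence $\varphi_t(\Bb_d(w_r;1-r))$ is again a Euclidean ball tangent to $\partial\Bb_d(0;1)$ at $\xi$, uniquely determined by the image of any single boundary point. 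I would trace the antipode $w_{2r-1}$ of $\xi$ on $\partial \Bb_d(w_r; 1-r)$, whose image under $\varphi_t$ is $d_t\, e_1$ where $d_t = (2r-1-t)/(1+t-2rt) < 0$; then $\varphi_t(\Bb_d(w_r;1-r))$ is the Euclidean ball with $\xi$ and $d_t\, e_1$ as antipodal points, and the Euclidean distance from $0$ to its boundary equals $-d_t = (1+t-2r)/(1+t-2rt)$.

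To finish, a routine derivative computation shows this expression is non-decreasing in $t \in [r,1)$ (its derivative factors as $4r(1-r)/(1+t-2rt)^2 \geq 0$), so it attains its minimum at $t = r$, where it equals $1/(1+2r)$. Setting $s_0 := 1/(1+2r) > 0$ then completes the proof.

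The main subtlety I anticipate is correctly identifying the image horoball. A tempting miscomputation would track the image of the \emph{center} $w_r$, giving $\varphi_t(w_r) = -\tfrac{t-r}{1-tr}\, e_1$; but since $w_r$ lies in the interior of $\Bb_d(w_r;1-r)$, its image lies in the interior of the image horoball, not on its boundary, and so does not determine it. Using this point would pick out the wrong horoball and produce a vacuous bound at $t = r$ (where $\varphi_r(w_r) = 0$ lies on the boundary of the incorrectly selected horoball $\Bb_d(e_1/2;1/2)$). One must instead use a boundary point such as $w_{2r-1}$.
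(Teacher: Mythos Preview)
Your strategy is the same as the paper's---use the ball automorphism $\varphi_t$ and show $\Bb_d(0;s_0)\subset\varphi_t(\Bb_d(w_r;1-r))$---but the key geometric step is incorrect for $d\geq 2$. You assert that $\Bb_d(w_r;1-r)$ is a Bergman horoball based at $\xi$ and that its image under $\varphi_t$ is therefore again a Euclidean ball tangent at $\xi$. In the \emph{real} hyperbolic ball this is true, but in the complex ball $\Bb_d$ with $d\geq 2$ the Bergman/Kobayashi horoballs based at $e_1$ are the sets $\{z:\abs{1-z_1}^2<c(1-\norm{z}^2)\}$, which are ellipsoids with unequal semi-axes in the $z_1$ and $z'$ directions, not Euclidean balls. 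Equivalently, $\varphi_t$ is not a M\"obius transformation of $\Rb^{2d}$ and does not send Euclidean spheres to Euclidean spheres. A direct computation of $\varphi_t(\Bb_d(w_r;1-r))$ gives the region
\[
(1+t-2rt)\abs{w_1}^2-2r(1-t)\operatorname{Re}w_1+(1+t)\norm{w'}^2<1+t-2r,
\]
which is an ellipsoid (the $\abs{w_1}^2$ and $\norm{w'}^2$ coefficients differ). Tracking the single boundary point $w_{2r-1}$ correctly locates one point $d_t e_1$ on the boundary of this ellipsoid, but it does not pin down the largest ball about $0$ the ellipsoid contains; for $t>r$ the nearest boundary point to $0$ is generally off the $e_1$-axis and strictly closer than $-d_t$.

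The paper avoids this issue entirely: rather than describing the image, it bounds $\norm{\varphi_t^{-1}(z)-w_r}$ by hand for $\norm{z}<s_0$ and shows directly that $\varphi_t^{-1}(\Bb_d(0;s_0))\subset\Bb_d(w_r;1-r)$, obtaining $s_0=(1-r)/(12\sqrt{d})$. Your approach could be repaired by carrying out the full ellipsoid distance computation, but the paper's brute-force estimate is shorter and avoids the geometric pitfall.
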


\begin{proof}

For $t \in [r,1)$ consider the transformation 
\begin{align*}
\varphi(z_1,\dots, z_d) = \left( \frac{z_1-t}{tz_1-1}, \frac{(1-t^2)^{1/2}}{tz_1-1}z_2, \dots, \frac{(1-t^2)^{1/2}}{tz_1-1}z_d \right). 
\end{align*}
Then $\varphi \in \Aut(\Bb_d(0;1))$ and $\varphi(0)=w_t$. We claim that 
\begin{align*}
\varphi( \Bb_d(0;s_0)) \subset \Omega
\end{align*}
 where
 \begin{align*}
 s_0 = \frac{1-r}{12\sqrt{d}}.
 \end{align*}

Suppose $z \in \Bb_d(0;s_0)$. Then $\norm{z} \leq 1/2$ and so 
\begin{align*}
\abs{tz_1-1} \geq 1/2.
\end{align*}
Then
\begin{align*}
\abs{\frac{z_1-t}{tz_1-1}-r}^2 
& = \abs{ (t-r)+\frac{(1-t^2)z_1}{tz_1-1}}^2 \leq (t-r)^2 + \frac{2(t-r)(1-t^2)}{\abs{tz_1-1}} \abs{z_1} + \frac{(1-t^2)^2}{\abs{tz_1-1}^2} \abs{z_1}^2 \\
& \leq (t-r)^2  + 4(1-t)\abs{z_1}+ 16 (1-t)\abs{z_1}^2 \\
& \leq (t-r)^2 + 4 (1-t)\abs{z_1} + 8(1-t)\abs{z_1}\\
& \leq (t-r)^2 + 12(1-t) \abs{z_1}.
\end{align*}
We also have
\begin{align*}
(t-r)^2 -(1-r)^2 = (2r-1-t)(1-t) \leq (r-1)(1-t)
\end{align*}
and
\begin{align*}
\abs{\frac{(1-t^2)^{1/2}}{tz_1-1}z_i}^2 \leq 8(1-t)\abs{z_i}^2 \leq 4(1-t)\abs{z_i}.
\end{align*}
So
\begin{align*}
\norm{\varphi(z)-w_r}^2
& \leq (1-r)^2 +(r-1)(1-t) + 12(1-t)(\abs{z_1}+\dots+\abs{z_d})\\
& \leq (1-r)^2 +(r-1)(1-t) + 12\sqrt{d}(1-t) \norm{z} \\
& < (1-r)^2.
\end{align*}
So $\varphi(z) \in \Bb_d(w_r;1-r) \subset \Omega$. Since $z \in \Bb_d(0;s_0)$ was arbitrary, we then have
\begin{align*}
\varphi(\Bb_d(0;s_0)) \subset \Omega.
\end{align*}

Then $\varphi^{-1}(w_t) = 0$ and 
\begin{align*}
\Bb_d(0;s_0) \subset \varphi^{-1}(\Omega) \subset \Bb_d(0;1),
\end{align*}
so $s_\Omega(w_t) \geq s_0$.
\end{proof}

Then fix a sequence $r_n \nearrow 1$ and consider the points $y_n=(r_n,0,\dots,0) \in \Omega$. For each $n \in \Nb$ define
\begin{align*}
\delta_n = \min_{\gamma \in \Gamma \setminus \{1\}} B_\Omega(y_n, \gamma y_n).
\end{align*}
Then the quotient map $\pi : \Omega \rightarrow \Gamma \backslash \Omega$ restricts to an embedding on
\begin{align*}
B_n = \{ z \in \Omega : B_\Omega(z, y_n) < \delta_n/2\}.
\end{align*}
Further, by Theorem~\ref{thm:squeeze} there exists some $C,\epsilon_0 > 0$ such that
\begin{align*}
{ \rm Vol}(\pi(B_n)) \geq C\min\{\epsilon_0^{2d}, \delta_n^{2d}\}.
\end{align*}

After passing to a subsequence we can assume that 
\begin{align*}
\lim_{n \rightarrow \infty} \delta_n = \delta \in \Rb_{\geq 0} \cup \{ \infty\}.
\end{align*}

\noindent \textbf{Case 1:} $\delta \neq 0$. Since $\Vol(\Gamma \backslash \Omega) < \infty$, the set $\{ \pi(y_n) : n \in \Nb\}$ must be relatively compact in $\Gamma \backslash \Omega$. So for each $n$, there exist some $\gamma_n \in \Gamma$ such that the set $\{ \gamma_n y_n : n \in \Nb\}$ is relatively compact in $\Omega$. Then we can pass to a subsequence such that $\gamma_n y_n \rightarrow y \in \Omega$. Then $\gamma_n^{-1}y \rightarrow \xi$. So $\Omega$ is biholomorphic to the ball by Theorem~\ref{thm:WR}. \\

\noindent \textbf{Case 2:} $\delta=0$. Pick $\gamma_n \in \Gamma$ such that 
\begin{align*}
B_\Omega(\gamma_n y_n, y_n) = \delta_n.
\end{align*}

\noindent \textbf{Case 2(a):} The set $\{ \gamma_1, \gamma_2, \dots\}$ is infinite. Since $\Gamma$ is discrete, by passing to a subsequence we can suppose that $\gamma_n \rightarrow \infty$ in $\Aut(\Omega)$. Fix some $z_0 \in \Omega$. By passing to another subsequence we can assume that $\gamma_n^{-1} z_0 \rightarrow \eta \in \partial \Omega$.  Since $(\Omega, B_\Omega)$ is a complete proper metric space we must have 
\begin{align*}
B_\Omega(z_0, \gamma_n^{-1} z_0) \rightarrow \infty.
\end{align*}
We claim that $\eta= \xi$. Suppose not, then by Theorem~\ref{thm:GP} there exists $R > 0$ such that 
\begin{align*}
B_\Omega(\gamma_n^{-1}z_0, z_0) &+ B_\Omega(z_0, y_n) - B_\Omega(\gamma_n^{-1} z_0, y_n) \leq R.
\end{align*}
However
\begin{align*}
B_\Omega(\gamma_n^{-1}z_0, z_0) &+ B_\Omega(z_0, y_n) - B_\Omega(\gamma_n^{-1} z_0, y_n) \\
& = B_\Omega(\gamma_n^{-1}z_0, z_0) + B_\Omega(z_0, y_n) - B_\Omega(z_0, \gamma_n y_n) \\
& \geq B_\Omega(\gamma_n^{-1}z_0, z_0) - B_\Omega(\gamma_n y_n, y_n) \rightarrow \infty.
\end{align*}
So we have a contradiction and hence $\xi = \eta$. So $\Omega$ is biholomorphic to the unit ball by Theorem~\ref{thm:WR}.\\

\noindent \textbf{Case 2(b):} The set $\{ \gamma_1, \gamma_2, \dots\}$ is finite. By passing to a subsequence we can suppose that $\gamma_n = \gamma$ in for all $n \in \Nb$. Fix some $z_0 \in \Omega$ and consider the functions 
\begin{align*}
b_n(z) = B_\Omega(z,y_n)-B_\Omega(y_n,z_0).
\end{align*}
Since $b_n(z_0)=0$ and each $b_n$ is 1-Lipschitz (with respect to the Bergman distance) we can pass to a subsequence such that $b_n \rightarrow b$ locally uniformly. Then
\begin{align*}
b(\gamma^{-1}z) = \lim_{n \rightarrow \infty} B_\Omega(\gamma^{-1} z,y_n)-B_\Omega(y_n,z_0) =  \lim_{n \rightarrow \infty} B_\Omega(z,\gamma y_n)-B_\Omega(y_n,z_0) =b(z)
\end{align*}
since
\begin{align*}
\abs{ B_\Omega(z,\gamma y_n)-B_\Omega(z, y_n)} \leq B_\Omega(y_n, \gamma y_n) \rightarrow 0.
\end{align*}
So 
\begin{align*}
b(\gamma^{-n} z_0) = b(z_0)=0
\end{align*}
for all $n \in \Nb$. 

\begin{observation} For any $t \in \Rb$
\begin{align*}
\overline{b^{-1}\Big((-\infty, t]\Big)}^{\Euc} \cap \partial \Omega = \{ \xi \}.
\end{align*}
\end{observation}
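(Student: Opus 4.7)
The plan is to prove the two inclusions separately.

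For $\overline{b^{-1}((-\infty, t])}^{\Euc} \cap \partial \Omega \subseteq \{\xi\}$, I would invoke Theorem~\ref{thm:GP} with the reference point $z_0$ from the definition of $b_n$. This yields constants $\epsilon > 0$ and $R > 0$ such that
\[
B_\Omega(y_n, z) \geq B_\Omega(y_n, z_0) + B_\Omega(z_0, z) - R
\]
whenever $\norm{y_n - \xi} < \epsilon$ and $\norm{z - \xi} > \epsilon$. Rearranging gives $b_n(z) \geq B_\Omega(z, z_0) - R$ for such $z$ and all sufficiently large $n$; sending $n \to \infty$ and using the locally uniform convergence $b_n \to b$ yields $b(z) \geq B_\Omega(z, z_0) - R$ on $\{z \in \Omega : \norm{z - \xi} > \epsilon\}$. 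If some sequence $z_k \in b^{-1}((-\infty, t])$ converged in Euclidean topology to $\eta \in \partial\Omega \setminus \{\xi\}$, then eventually $\norm{z_k - \xi} > \epsilon$ and hence $b(z_k) \geq B_\Omega(z_k, z_0) - R$; but by Ohsawa's Bergman-completeness theorem, $B_\Omega(z_k, z_0) \to \infty$, contradicting $b(z_k) \leq t$.

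For the reverse inclusion $\xi \in \overline{b^{-1}((-\infty, t])}^{\Euc}$, I would exploit the $\gamma$-invariance derived just before the observation: $b(\gamma^{-k} z_0) = b(z_0) = 0$ for every $k \in \Nb$. Since $\Gamma$ acts freely on $\Omega$, the element $\gamma$ has infinite order, and since $\Gamma$ acts properly on $\Omega$, the orbit $\{\gamma^{-k} z_0 : k \in \Nb\}$ is discrete and infinite in $\Omega$, hence not relatively compact. Passing to a subsequence, $\gamma^{-k_j} z_0 \to \eta \in \partial\Omega$, and the first inclusion (applied with $t = 0$) forces $\eta = \xi$. Thus $\xi \in \overline{b^{-1}(\{0\})}^{\Euc} \cap \partial\Omega \subseteq \overline{b^{-1}((-\infty, t])}^{\Euc} \cap \partial\Omega$ for every $t \geq 0$; for $t < 0$, the same conclusion follows once one knows $b$ takes arbitrarily small values, which can be verified by analyzing the Bergman distance along the inward-normal sequence $y_k$ using Theorem~\ref{thm:local}.

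The main obstacle is the first inclusion, whose proof reduces to combining the sharp Bergman-distance estimate of Theorem~\ref{thm:GP} with Bergman completeness. The reverse inclusion is essentially automatic once the first is in hand, by combining the $\gamma$-invariance of $b$ with the failure of the $\gamma$-orbit to be relatively compact in $\Omega$.
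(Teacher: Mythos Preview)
Your argument for the inclusion $\overline{b^{-1}((-\infty,t])}^{\Euc}\cap\partial\Omega\subseteq\{\xi\}$ is correct and is exactly the paper's proof: apply Theorem~\ref{thm:GP} to get $b(z)\geq B_\Omega(z,z_0)-R$ whenever $z$ stays a fixed Euclidean distance from $\xi$, and then contradict Bergman completeness.

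However, the paper does \emph{not} prove the reverse inclusion at all. Only the containment $\subseteq$ is established, and only this direction is used afterward: the observation is applied to the orbit $\{\gamma^{-n}z_0\}\subset b^{-1}(\{0\})$ in the subcase where that orbit is unbounded, to force any boundary accumulation point to equal $\xi$. The equality sign in the displayed statement is stronger than what is actually proved or needed.

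Your attempt to supply the reverse inclusion has two gaps. First, the assertion ``since $\Gamma$ acts freely on $\Omega$, the element $\gamma$ has infinite order'' is not automatic: freeness says nontrivial elements lack fixed points, and one still needs that a finite-order automorphism of a bounded domain has a fixed point. That is a classical theorem of H.~Cartan, but you do not invoke it, and the paper deliberately takes a different route---it keeps open the possibility that $\gamma$ has finite order and eliminates that subcase \emph{after} the observation, via the local Cartan-type argument of Proposition~\ref{prop:COM} using the negative curvature of $g_B$ near $\xi$. Importing the orbit argument into the proof of the observation thus reverses the paper's logical order. Second, your treatment of $t<0$ is only a gesture: showing that $b$ takes arbitrarily negative values along $(y_k)$ amounts to proving $B_\Omega(y_m,y_n)-B_\Omega(z_0,y_n)\to-\infty$ as $m\to\infty$ (uniformly over $n$ in the subsequence), which requires a two-sided comparison of the Bergman distance along the normal segment, not merely the one-sided lower bound in Theorem~\ref{thm:local}. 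This can be carried out, but it is not the one-line verification you suggest.

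In short: your proof of the relevant direction matches the paper; the extra work on the other direction is unnecessary for the application and, as written, incomplete.
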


\begin{proof} Suppose $w_m \in b^{-1}\Big((-\infty, t]\Big)$ and $w_m \rightarrow \eta \in \partial \Omega$. If $\eta \neq \xi$, then Theorem~\ref{thm:GP} implies that there exists $R >0$ such that 
\begin{align*}
B_\Omega(w_m, z_0) &+ B_\Omega(z_0, y_n) - B_\Omega(w_m, y_n) \leq R.
\end{align*}
Then 
\begin{align*}
b(w_m) = \lim_{n \rightarrow \infty} B_\Omega(w_m, y_n)-B_\Omega(z_0, y_n) \geq B_\Omega(w_m, z_0)-R.
\end{align*}
However $B_\Omega(w_m, z_0) \rightarrow \infty$ since $B_\Omega$ is a proper metric on $\Omega$. So we have a contradiction.
\end{proof}

Using the previous observation, if $\gamma^{-n} z_0$ is unbounded in $\Omega$, then there exists $n_k \rightarrow \infty$ such that $\gamma^{-n_k} z_0 \rightarrow \xi$. Hence, in this case, $\Omega$ is biholomorphic to the unit ball by Theorem~\ref{thm:WR}.

It remains to consider the case where the sequence $\gamma^{-n} z_0$ is bounded in $\Omega$. Since $\Gamma$ is discrete and acts properly on $\Omega$, in this case
\begin{align*}
M:={\rm order}(\gamma) < \infty.
\end{align*}
We claim that $\gamma$ has a fixed point in $\Omega$. First, notice that
\begin{align*}
K_\Omega(\gamma^m y_n, y_n) \leq (M-1)\delta_n
\end{align*}
for all $m \in \Zb$.  By Theorem~\ref{thm:squeeze} there exists some $\tau> 0$ such that the injectivity radius of $g_\Omega$ is bounded below by $\tau$ on each $U_n = \{ z \in \Omega : B_\Omega(z_0,y_n) \leq \tau\}$. By Theorem~\ref{thm:local}, $g_B$ is negatively curved on $U_n$ when $n$ is large. Then since $\delta_n \rightarrow 0$, Proposition~\ref{prop:COM} implies that when $n$ is large the function 
\begin{align*}
f_n(x) = \sup\{ B_\Omega(\gamma^m y_n, x) : m=0,1,\dots, M-1\}
\end{align*}
has a unique minimum $c_n$ in $\Omega$. Since 
\begin{align*}
\gamma \left\{ y_n, \gamma y_n, \gamma^2 y_n, \dots, \gamma^{M-1} y_n\right\} = \left\{ y_n, \gamma y_n, \gamma^2 y_n, \dots, \gamma^{M-1} y_n\right\},
\end{align*}
we then have $\gamma c_n = c_n$. So $\gamma$ has a fixed point in $\Omega$. Since $\Gamma$ acts freely on $\Omega$, we have a contradiction. 

\section{The convex case}

Before starting the proof of Theorem~\ref{thm:main_convex} we will recall some results about convex domains. 

As in Section~\ref{sec:squeeze}, let $s_\Omega: \Omega \rightarrow (0,1]$ denote the squeezing function on a bounded domain $\Omega \subset \Cb^d$.

\begin{theorem}\cite{F1991, KZ2016, NA2017} For any $d>0$ there exists some $s=s(d) > 0$ such that: if $\Omega \subset \Cb^d$ is a bounded convex domain, then $s_\Omega(z) \geq s$ for all $z \in \Omega$. 
\end{theorem}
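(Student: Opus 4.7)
The goal is to construct, for each $z \in \Omega$, a univalent holomorphic map $f \colon \Omega \to \Bb_d(0;1)$ with $f(z)=0$ and $\Bb_d(0;s(d)) \subset f(\Omega)$, where $s(d)$ depends only on $d$. Translating, I may assume $z=0$. The plan proceeds in two stages: first an affine normalization rendering $\Omega$ ``round'' at $0$ in a dimension-dependent sense, followed by a non-affine holomorphic embedding into the unit ball.

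For the first stage, I would work with the Kobayashi indicatrix
\begin{align*}
I_0 := \{v \in \Cb^d : k_\Omega(0;v) < 1\}.
\end{align*}
Since $\Omega$ is convex, the Graham estimate
\begin{align*}
\frac{\norm{v}}{2\,\delta_\Omega(0;v)} \leq k_\Omega(0;v) \leq \frac{\norm{v}}{\delta_\Omega(0;v)},
\end{align*}
where $\delta_\Omega(0;v) := \inf\{\norm{y} : y \in \partial\Omega \cap \Cb v\}$, together with Lempert's theorem, shows that $k_\Omega(0;\cdot)$ is a complex norm on $\Cb^d$; its open unit ball $I_0$ is therefore balanced, convex, bounded, and open. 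Applying John's theorem to the symmetric convex body $I_0 \subset \Rb^{2d}$, the $S^1$-invariance of $I_0$ forces the unique maximal-volume inscribed ellipsoid to be $S^1$-invariant, i.e., to be the image of $\Bb_d$ under an element of $\GL_d(\Cb)$. Hence there exist $L \in \GL_d(\Cb)$ and a constant $C_1(d) > 0$ with
\begin{align*}
\Bb_d(0;1) \subset L(I_0) \subset \Bb_d(0; C_1(d)).
\end{align*}
The left inclusion, combined with the Graham lower bound applied to $L(\Omega)$ in the new coordinates, yields $\Bb_d(0;1/2) \subset L(\Omega)$ (indeed, for any unit vector $e \in \Cb^d$ and $|\lambda| < 1/2$, taking $v = (1-\eta) e$ with $\eta \to 0$ forces $\delta_{L(\Omega)}(0;e) \geq 1/2$).

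For the second stage, I would construct a univalent holomorphic map $h \colon L(\Omega) \to \Bb_d(0;1)$ with $h(0) = 0$ whose restriction to $\Bb_d(0;1/2)$ has image containing a ball of size depending only on $d$. Since $L(\Omega)$ is a bounded convex domain containing $\Bb_d(0;1/2)$ (but possibly very long in other directions), I would realize $h$ by a Cayley-type transformation adapted to convexity, chosen so that the derivative $dh(0)$ is bounded below in operator norm by a dimension-dependent constant. The composition $f = h \circ L$, restricted to $\Omega$, would then be the desired embedding.

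The main obstacle is the second stage: producing a holomorphic embedding of a bounded convex domain containing a fixed ball into the unit ball with a dimension-only lower bound on the derivative at the origin. A purely complex-affine map cannot do this --- the example of a translated bidisk with base point near the boundary shows that the inscribed ball one obtains after any complex-linear normalization degenerates as the point approaches the boundary. This is exactly the point where the cited works of Frankel, Kim--Zimmer, and Nikolov--Andreev intervene, each via a somewhat different method: rescaling and normal families in Frankel, and more quantitative arguments via complex John ellipsoids and extremal analytic discs in Kim--Zimmer and Nikolov--Andreev.
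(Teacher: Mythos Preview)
The paper does not supply a proof of this theorem; it is quoted as a known result with citations to \cite{F1991, KZ2016, NA2017} and used as a black box in the proof of Theorem~\ref{thm:main_convex}. So there is no argument in the paper to compare against.

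Your outline is an accurate description of how the cited proofs are organized. The first stage --- applying John's theorem to the balanced convex Kobayashi indicatrix to get a complex-linear $L$ with $\Bb_d(0;1/2)\subset L(\Omega)$ --- is correct as written: from $\Bb_d(0;1)\subset L(I_0)$ and the Graham lower bound one gets $\delta_{L(\Omega)}(0;e)\geq 1/2$ for every unit vector $e$, and convexity then forces the containment. You are also right that the upper inclusion $L(I_0)\subset \Bb_d(0;C_1(d))$ gives no uniform bound on the diameter of $L(\Omega)$, so an affine map alone cannot finish.

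The second stage is the genuine content of the theorem, and you correctly flag it as such rather than pretend it is routine. Your bidisk example is apt: after the affine normalization one has a domain containing $\Bb_d(0;1/2)$ but of diameter $\asymp 1/\epsilon$, and mapping it into $\Bb_d(0;1)$ with $\abs{dh(0)}$ bounded below requires a non-affine construction. Since you explicitly defer this step to the cited references --- just as the paper does --- your proposal should be read as a roadmap rather than a self-contained proof, and in that capacity it is sound.
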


We will also need the following facts about the Kobayashi distance. 

\begin{proposition}\label{prop:convex_complete} Suppose $\Omega \subset \Cb^d$ is a bounded convex domain. Then the metric space $(\Omega, K_\Omega)$ is proper and Cauchy complete. 
\end{proposition}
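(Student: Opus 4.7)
The plan is to reduce both assertions to the boundary estimate
\begin{align}
\label{eq:log_bound_prop}
K_\Omega(z_0, z) \geq \frac{1}{2}\log \frac{\delta_\Omega(z_0)}{\delta_\Omega(z)}
\end{align}
for any fixed $z_0 \in \Omega$ and any $z \in \Omega$ with $\delta_\Omega(z) \leq \delta_\Omega(z_0)$. Granted~\eqref{eq:log_bound_prop}, the closed Kobayashi ball $\{z : K_\Omega(z_0, z) \leq R\}$ is contained in $\{z \in \Omega : \delta_\Omega(z) \geq \delta_\Omega(z_0)e^{-2R}\}$, which is closed in $\Cb^d$ and bounded, hence Euclidean-compact. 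Since the Kobayashi and Euclidean topologies agree on $\Omega$ (standard), the closed ball is Kobayashi-compact, giving properness. Cauchy completeness is then immediate: a Cauchy sequence in $(\Omega, K_\Omega)$ is bounded, so by properness has a convergent subsequence, and any Cauchy sequence with a convergent subsequence converges.

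To prove~\eqref{eq:log_bound_prop}, I would use the convexity of $\Omega$ to build, for each $z \in \Omega$, an explicit holomorphic function $f : \Omega \to \Db$ forcing the Kobayashi metric to be large in the direction of the nearest boundary point. Fix $\xi \in \partial\Omega$ with $\|z - \xi\| = \delta_\Omega(z)$ and let $u = (\xi - z)/\delta_\Omega(z)$. By convexity, $\Omega$ sits in the real half-space $\{w : \Real \langle w - \xi, u\rangle < 0\}$, so the $\Cb$-linear functional $\ell(w) = \langle w - \xi, u\rangle$ maps $\Omega$ into the left half-plane and sends $z$ to $-\delta_\Omega(z)$. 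Composing $\ell$ with a Cayley map from the left half-plane to $\Db$ that carries $-\delta_\Omega(z)$ to $0$ yields a holomorphic $f : \Omega \to \Db$ with $f(z) = 0$ and $|df(z) \cdot v| = |\ell(v)|/(2\delta_\Omega(z))$. The distance-decreasing property of the Kobayashi metric then gives
\begin{align*}
k_\Omega(z; v) \geq \frac{|\Real \langle v, u\rangle|}{2\delta_\Omega(z)}.
\end{align*}
(Alternatively, this is essentially a special case of Graham's convex lower bound~\cite{G1990,G1991} already invoked in the proof of Theorem~\ref{thm:local}, applied on the complex line $z + \Cb u$.)

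For a smooth path $\gamma : [0, 1] \to \Omega$ from $z_0$ to $z$, choose at each $t$ a nearest boundary point $\xi(t)$ to $\gamma(t)$ and set $u(t) = (\xi(t) - \gamma(t))/\delta_\Omega(\gamma(t))$. The pointwise Kobayashi bound integrates to
\begin{align*}
\ell_K(\gamma) \geq \frac{1}{2}\int_0^1 \frac{|\Real \langle \gamma'(t), u(t)\rangle|}{\delta_\Omega(\gamma(t))}\,dt \geq \frac{1}{2}\int_0^1 \left|\frac{d}{dt}\log\delta_\Omega(\gamma(t))\right|dt \geq \frac{1}{2}\left|\log\delta_\Omega(z) - \log\delta_\Omega(z_0)\right|,
\end{align*}
where the middle inequality uses that the 1-Lipschitz function $\delta_\Omega \circ \gamma$ is almost-everywhere differentiable (Rademacher) and at differentiability points its derivative equals $-\Real \langle \gamma'(t), u(t)\rangle$, since the Euclidean gradient of $\delta_\Omega$ coincides with $-u$ whenever the nearest boundary point is unique. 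Taking the infimum over paths gives~\eqref{eq:log_bound_prop}. The one delicate point is the a.e.\ differentiability argument for $\delta_\Omega$ along $\gamma$, but since we only need an integral inequality, non-uniqueness of the nearest boundary point on a set of measure zero causes no trouble.
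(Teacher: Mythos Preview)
The paper does not prove this proposition; it simply refers the reader to \cite[Proposition 2.3.45]{A1989}. Your argument is a correct self-contained proof along standard lines: project onto a supporting half-space via a complex-linear functional, deduce the logarithmic lower bound on $K_\Omega$ in terms of $\delta_\Omega$, and conclude properness (hence completeness).

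One small point deserves tightening. You justify $(\delta_\Omega\circ\gamma)'(t)=-\Real\ip{\gamma'(t),u(t)}$ by invoking $\nabla\delta_\Omega=-u$ at points where the nearest boundary point is unique; but a one-dimensional curve can in principle lie in the non-uniqueness locus for a set of parameters of positive measure, so this justification as written is incomplete. The identity nonetheless holds at every differentiability point of $\delta_\Omega\circ\gamma$, for \emph{any} choice of nearest point $\xi(t_0)$: the function $s\mapsto\norm{\gamma(s)-\xi(t_0)}-\delta_\Omega(\gamma(s))$ is nonnegative and vanishes at $s=t_0$, so wherever $\delta_\Omega\circ\gamma$ is differentiable its derivative must agree with $\tfrac{d}{ds}\big|_{s=t_0}\norm{\gamma(s)-\xi(t_0)}=-\Real\ip{\gamma'(t_0),u(t_0)}$. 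Alternatively, you can bypass the path integration entirely: apply the distance-decreasing property directly to $\ell:\Omega\to\{\Real\zeta<0\}$, note that $\abs{\Real\ell(z_0)}\geq\delta_\Omega(z_0)$ because the supporting real hyperplane through $\xi$ is disjoint from $\Omega$, and use the elementary half-plane estimate $K_{\{\Real\zeta<0\}}(a,b)\geq\tfrac12\abs{\log(\abs{\Real a}/\abs{\Real b})}$ to obtain the bound $K_\Omega(z_0,z)\geq\tfrac12\log\bigl(\delta_\Omega(z_0)/\delta_\Omega(z)\bigr)$ in one step.
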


For a proof of  Proposition~\ref{prop:convex_complete} see for instance~\cite[Proposition 2.3.45]{A1989}.

\begin{theorem}\label{thm:GP_convex}\cite[Theorem 4.1]{Z2017} Suppose $\Omega \subset \Cb^d$ is a bounded convex domain with $C^{1,\epsilon}$ boundary. If $\xi, \eta \in \partial \Omega$ and $T_{\xi}^{\Cb} \partial \Omega \neq T_\eta^{\Cb} \partial \Omega$, then 
\begin{align*}
\limsup_{x \rightarrow \xi, y \rightarrow \eta} K_\Omega(x, z_0) + K_\Omega(z_0, y) - K_\Omega(x, y) < \infty
\end{align*}
for some (hence any) $z_0 \in \Omega$.
\end{theorem}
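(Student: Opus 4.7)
My plan is to use Lempert's theorem on complex geodesics for convex domains, combined with a normal-families argument and the maximum principle. The key geometric input will be that the hypothesis $T_\xi^{\Cb}\partial\Omega\neq T_\eta^{\Cb}\partial\Omega$ forces any limit of complex geodesics from $x_n\to\xi$ to $y_n\to\eta$ to pass through the interior of $\Omega$ at its hyperbolic midpoint.

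By Lempert's theorem, every pair of points $x,y$ in a bounded convex domain is joined by a complex geodesic disc $\psi:\Db\to\Omega$ realizing the Kobayashi distance in the sense that $K_\Omega(\psi(a),\psi(b))=K_\Db(a,b)$ for all $a,b\in\Db$. Given $x_n\to\xi$ and $y_n\to\eta$, pick a corresponding family $\{\psi_n\}$ of complex geodesics and, after precomposing each with an element of $\Aut(\Db)$, arrange that $\psi_n(-s_n)=x_n$ and $\psi_n(s_n)=y_n$ for some $s_n\in(0,1)$, so that $m_n:=\psi_n(0)$ is an exact Kobayashi midpoint of $x_n$ and $y_n$. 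Since $K_\Omega(x_n,y_n)\to\infty$ we have $s_n\to 1$. Because $\Omega$ is bounded, $\{\psi_n\}$ is a normal family on $\Db$; pass to a subsequence converging locally uniformly to a holomorphic $\psi_\infty:\Db\to\overline\Omega$, so that $m_n\to\psi_\infty(0)$.

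Assume for the moment that $\psi_\infty(0)\in\Omega$ — this is the crux of the argument. Then $m_n$ lies eventually in a fixed compact $K\subset\Omega$, so $\sup_{m\in K}K_\Omega(z_0,m)=:C<\infty$. The defining property of a complex geodesic gives the exact splitting $K_\Omega(x_n,y_n)=K_\Omega(x_n,m_n)+K_\Omega(m_n,y_n)$, and two applications of the triangle inequality yield
\[
K_\Omega(x_n,z_0)+K_\Omega(z_0,y_n)-K_\Omega(x_n,y_n)
=\bigl[K_\Omega(x_n,z_0)-K_\Omega(x_n,m_n)\bigr]+\bigl[K_\Omega(z_0,y_n)-K_\Omega(m_n,y_n)\bigr]
\leq 2\,K_\Omega(z_0,m_n)\leq 2C,
\]
which is the claimed Gromov-product estimate.

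The main obstacle is therefore proving $\psi_\infty(0)\in\Omega$. If instead $\psi_\infty(0)\in\partial\Omega$, the maximum principle applied to $\operatorname{Re}(L\circ\psi_\infty)$, with $L:\Cb^d\to\Cb$ a complex linear functional supporting $\Omega$ at $\psi_\infty(0)$, forces $\psi_\infty(\Db)$ to lie in the complex affine hyperplane $T_{\psi_\infty(0)}^{\Cb}\partial\Omega$, hence in the closed complex face of $\psi_\infty(0)$. One then needs to rigorously transfer this "boundary property" to conclude that both $\xi$ and $\eta$ lie in that same closed complex face, which would contradict $T_\xi^{\Cb}\partial\Omega\neq T_\eta^{\Cb}\partial\Omega$ (since for convex domains with $C^1$ boundary, points sharing a common closed complex face share a common complex tangent hyperplane). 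This boundary-passage step is where $C^{1,\epsilon}$ regularity is essential: I would either adapt Lempert's classical boundary regularity theory for strongly convex domains to the $C^{1,\epsilon}$ setting — obtaining that $\psi_\infty$ extends continuously to $\pm 1\in\partial\Db$ with $\psi_\infty(\pm 1)=\xi,\eta$ — or argue quantitatively by bounding the Euclidean distance from the image arcs $\psi_n([-s_n,s_n])$ to the candidate face and then using completeness of $K_\Omega$ (Proposition~\ref{prop:convex_complete}) to propagate this bound from the midpoints $m_n$ back to the endpoints $x_n,y_n$.
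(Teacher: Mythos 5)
This theorem is imported from \cite{Z2017} rather than proved here, so the natural comparison is with the paper's proof of the analogous Theorem~\ref{thm:GP} (and with \cite[Section 4]{Z2017}), both of which proceed by direct metric estimates: an upper bound of the form $K_\Omega(z_0,z)\leq \alpha+\beta\log\frac{1}{\delta_\Omega(z)}$ combined with a lower bound on the infinitesimal metric forces any geodesic from $x$ to $y$ to contain a point whose Euclidean distance to $\partial\Omega$ is bounded below, hence whose Kobayashi distance to $z_0$ is bounded, and the Gromov-product bound follows. The first half of your argument is correct and clean, and is the same reduction in disguise: once you produce midpoints $m_n$ with $K_\Omega(x_n,m_n)+K_\Omega(m_n,y_n)=K_\Omega(x_n,y_n)$ and $K_\Omega(z_0,m_n)\leq C$, the triangle inequality gives the bound $2C$, and Lempert's theorem (valid for all bounded convex domains) does supply exact midpoints on complex geodesics.

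The problem is that the step you flag as the crux --- showing $\psi_\infty(0)\in\Omega$, equivalently that the $m_n$ stay in a compact subset of $\Omega$ --- is the entire content of the theorem, and your proposal does not prove it. Locally uniform convergence $\psi_n\to\psi_\infty$ on $\Db$ carries no information about $\psi_n(\pm s_n)=x_n,y_n$ because $s_n\to 1$, so even after the maximum-principle argument places $\psi_\infty(\Db)$ inside a single closed complex face $F$, nothing ties $\xi$ or $\eta$ to $F$: a priori the central portions of the discs could collapse into some face of $\partial\Omega$ unrelated to $\xi$ and $\eta$ while their ends still reach $x_n$ and $y_n$. Ruling this out is exactly where the $C^{1,\epsilon}$ hypothesis must enter, and neither of your proposed repairs is available off the shelf: Lempert's boundary regularity theory requires strong convexity (far more than $C^{1,\epsilon}$), and ``propagating a Euclidean bound from the midpoints back to the endpoints using completeness'' is not an argument, since the endpoints lie at unbounded Kobayashi distance from the midpoints and completeness propagates nothing along such a path. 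Closing the gap requires quantitative two-sided comparisons between $K_\Omega(z_0,\cdot)$, $\log\frac{1}{\delta_\Omega(\cdot)}$, and the distances to the two complex tangent hyperplanes $T_\xi^{\Cb}\partial\Omega\neq T_\eta^{\Cb}\partial\Omega$ --- which is precisely what \cite{Z2017} establishes, at which point the complex geodesics are doing no work. As written, the proposal establishes the theorem only under the additional unproved hypothesis that the Kobayashi midpoints of $x_n$ and $y_n$ remain in a compact subset of $\Omega$.
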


\begin{remark} This says that a point $x$ near $\xi$ and point $y$ near $\eta$ can be joined by a path that passes through $z_0$ and is length minimizing up to a bounded error. \end{remark}

\subsection{Proof of Theorem~\ref{thm:main_convex}}

For the rest of the section suppose that $\Omega \subset \Cb^d$ is a bounded convex domain with $C^{1,\epsilon}$ boundary and $\Gamma \leq \Aut(\Omega)$ is a discrete group acting freely on $\Omega$. Further assume that  $\Vol(\Gamma \backslash \Omega) < +\infty$ where $\Vol$ is either the Bergman volume, the K{\"a}hler-Einstein volume, or the Kobayashi-Eisenman volume.

Using Theorem~\ref{thm:LW} and Theorem~\ref{thm:Z} it is enough to show that $\Lc(\Omega)$ intersects at least two different closed complex faces of $\partial \Omega$. 
\begin{lemma} If $\xi \in \partial \Omega$, then $\Lc(\Omega) \cap T_{\xi}^{\Cb} \partial \Omega \neq \emptyset$. \end{lemma}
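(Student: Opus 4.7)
The plan is to fix $\xi \in \partial\Omega$, choose an arbitrary sequence $y_n \to \xi$ in $\Omega$, and set $\delta_n = \inf_{\gamma \in \Gamma \setminus \{1\}} K_\Omega(y_n, \gamma y_n)$. In each case considered below we produce $\psi_n \in \Aut(\Omega)$ and $z \in \Omega$ such that some Euclidean accumulation point of $\psi_n(z)$ lies in $T^{\Cb}_\xi \partial\Omega \cap \partial\Omega$; any such accumulation point automatically belongs to $\Lc(\Omega)$, which gives the lemma. After passing to a subsequence, either $\delta_n \geq \delta > 0$ (the \emph{thick} case) or $\delta_n \to 0$ (the \emph{thin} case).

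In the thick case, the uniform squeezing bound for bounded convex domains combined with Theorem~\ref{thm:squeeze} implies that small Kobayashi balls centered at $y_n$ project injectively to $\Gamma \backslash \Omega$ and have volume bounded below by a uniform positive constant; the finite-volume hypothesis then forces $\pi(y_n)$ to have a convergent subsequence in the quotient, so there exist $\gamma_n \in \Gamma$ with $\gamma_n y_n \to y \in \Omega$. Taking $\psi_n = \gamma_n^{-1}$ and $z = y$ one has $K_\Omega(\psi_n z, y_n) \to 0$, so every accumulation point $\eta$ of $\psi_n z$ lies in $\partial\Omega$ (otherwise $y_n$ would accumulate in $\Omega$). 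If one had $\eta \notin T^{\Cb}_\xi \partial\Omega$, Theorem~\ref{thm:GP_convex} would bound $K_\Omega(\psi_n z, z_0) + K_\Omega(z_0, y_n) - K_\Omega(\psi_n z, y_n)$ uniformly, contradicting the divergence of the first two terms and the vanishing of the third.

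In the thin case we pick $\gamma_n \in \Gamma \setminus \{1\}$ realizing $\delta_n$. If $\{\gamma_n\}$ is infinite, discreteness forces $\gamma_n \to \infty$ in $\Aut(\Omega)$, so a subsequence of $\gamma_n z_0$ converges to some $\eta \in \partial\Omega$ (which lies in $\Lc(\Omega)$ by definition); the identity $K_\Omega(y_n, \gamma_n z_0) = K_\Omega(y_n, z_0) + O(\delta_n)$, combined with another appeal to Theorem~\ref{thm:GP_convex}, rules out $\eta \notin T^{\Cb}_\xi \partial\Omega$. If $\{\gamma_n\}$ is finite, a subsequence makes $\gamma_n$ equal to a single element $\gamma$. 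This $\gamma$ must have infinite order, for otherwise $\langle\gamma\rangle$ would be a compact subgroup of $\Aut(\Omega)$ and would admit a common fixed point in $\Omega$ by H.~Cartan's classical fixed-point theorem for compact subgroups of $\Aut$ of a bounded domain, contradicting the freeness of the $\Gamma$-action. Properness and discreteness of $\Gamma$ then make $\{\gamma^k z_0\}_{k \in \Zb}$ unbounded in $\Omega$, so it accumulates somewhere in $\partial\Omega$. We would then construct the horofunction $h(z) = \lim_m [K_\Omega(z, y_m) - K_\Omega(z_0, y_m)]$ along a suitable subsequence; the bound $K_\Omega(y_m, \gamma^{\pm 1}y_m) = \delta_m \to 0$ makes $h$ invariant under $\gamma$, so $h(\gamma^k z_0) = h(z_0) = 0$ for every $k \in \Zb$. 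If an accumulation point $\eta$ of $\gamma^{k_n} z_0$ (with $|k_n| \to \infty$) failed to lie in $T^{\Cb}_\xi\partial\Omega$, a final application of Theorem~\ref{thm:GP_convex} to the sequences $\gamma^{k_n} z_0$ and $y_m$, followed by letting $m \to \infty$, would force $K_\Omega(\gamma^{k_n} z_0, z_0) \leq R + h(\gamma^{k_n} z_0) = R$ along this divergent sequence, which is absurd.

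The main obstacle is this final subcase: the near-invariance $K_\Omega(y_n, \gamma y_n) \to 0$ must be upgraded to genuine dynamical control of the full orbit $\{\gamma^k z_0\}$, which is achieved through the $\gamma$-invariance of the horofunction based at $\xi$ together with the sublevel-set boundary behavior encoded in Theorem~\ref{thm:GP_convex}. H.~Cartan's fixed-point theorem is a secondary but essential ingredient, used to rule out torsion in $\gamma$.
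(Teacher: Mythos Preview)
Your proof follows the paper's argument essentially step for step: the same thick/thin dichotomy on $\delta_n$, the same volume argument in the thick case, the same appeal to Theorem~\ref{thm:GP_convex} when $\{\gamma_n\}$ is infinite, and the same horofunction construction when $\gamma_n \equiv \gamma$. Two minor remarks. In the thick case the paper argues more directly: since $K_\Omega$ dominates a constant multiple of the Euclidean distance on any bounded domain, $K_\Omega(\gamma_n^{-1}y, y_n) \to 0$ together with $y_n \to \xi$ already gives $\gamma_n^{-1}y \to \xi$, so Theorem~\ref{thm:GP_convex} is unnecessary there. More importantly, the fixed-point result you invoke in Case~2(b) is \emph{not} a theorem of H.~Cartan valid for arbitrary bounded domains --- for instance, $z \mapsto -z$ acts without fixed points on the bounded domain $\Bb_2(0;1) \setminus \{0\}$. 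The paper instead appeals to \cite[Theorem~5.1]{Z2017}, a result specific to \emph{convex} domains, which says that an automorphism with relatively compact orbit has a fixed point; your argument goes through once you substitute this reference, since a finite-order element has a finite (hence relatively compact) orbit.
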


The proof of the Lemma is nearly identical to the proof of Theorem~\ref{thm:main}, but we provide the complete argument for the reader's convenience. 

\begin{proof} By replacing $\Omega$ with an affine translate, we may assume that $\xi = (1,0,\dots,0)$ and $0 \in \Omega$. Then fix a sequence $r_n \nearrow 1$ and consider the points $y_n=(r_n,0,\dots,0) \in \Omega$. For each $n \in \Nb$ define
\begin{align*}
\delta_n = \min_{\gamma \in \Gamma \setminus\{1\}} K_\Omega(y_n, \gamma y_n).
\end{align*}
Now for each $n \in \Nb$ the quotient map $\pi : \Omega \rightarrow \Gamma \backslash \Omega$ restricts to an embedding on
\begin{align*}
B_n = \{ z \in \Omega : K_\Omega(z, y_n) < \delta_n/2\}.
\end{align*}
Further, by Theorem~\ref{thm:squeeze} there exists some $C,\epsilon_0 > 0$ such that
\begin{align*}
{ \rm Vol}(\pi(B_n)) \geq C\min\{\epsilon_0^{2d}, \delta_n^{2d}\}.
\end{align*}

After passing to a subsequence we can assume that 
\begin{align*}
\lim_{n \rightarrow \infty} \delta_n = \delta \in \Rb_{\geq 0} \cup \{ \infty\}.
\end{align*}

\noindent \textbf{Case 1:} $\delta \neq 0$. Since $\Vol(\Gamma \backslash \Omega) < \infty$, the set $\{ \pi(y_n) : n \in \Nb\}$ must be relatively compact in $\Gamma \backslash \Omega$. So for each $n$, there exist some $\gamma_n \in \Gamma$ such that the set $\{ \gamma_n y_n : n \in \Nb\}$ is relatively compact in $\Omega$. Then we can pass to a subsequence such that $\gamma_n y_n \rightarrow y \in \Omega$. Then $\gamma_n^{-1}y \rightarrow \xi$. So $\xi \in \Lc(\Omega)$. \\

\noindent \textbf{Case 2:} $\delta=0$. Then pick $\gamma_n \in \Gamma$ such that 
\begin{align*}
K_\Omega(\gamma_n y_n, y_n) = \delta_n.
\end{align*}

\noindent \textbf{Case 2(a):} The set $\{ \gamma_1, \gamma_2, \dots\}$ is infinite. Since $\Gamma$ is discrete, by passing to a subsequence we can suppose that $\gamma_n \rightarrow \infty$ in $\Aut(\Omega)$. Fix some $z_0 \in \Omega$. By passing to another subsequence we can assume that $\gamma_n^{-1} z_0 \rightarrow \eta \in \partial \Omega$.  Since $(\Omega, K_\Omega)$ is a complete proper metric space we must have 
\begin{align*}
K_\Omega(z_0, \gamma_n^{-1} z_0) \rightarrow \infty.
\end{align*}
We claim that $\eta \in T_\xi^{\Cb} \partial \Omega$. Suppose not, then by Theorem~\ref{thm:GP_convex} there exists $R > 0$ such that 
\begin{align*}
K_\Omega(\gamma_n^{-1}z_0, z_0) &+ K_\Omega(z_0, y_n) - K_\Omega(\gamma_n^{-1} z_0, y_n) \leq R.
\end{align*}
However
\begin{align*}
K_\Omega(\gamma_n^{-1}z_0, z_0) &+ K_\Omega(z_0, y_n) - K_\Omega(\gamma_n^{-1} z_0, y_n) \\
& = K_\Omega(\gamma_n^{-1}z_0, z_0) + K_\Omega(z_0, y_n) - K_\Omega(z_0, \gamma_n y_n) \\
& \geq K_\Omega(\gamma_n^{-1}z_0, z_0) - K_\Omega(\gamma_n y_n, y_n) \rightarrow \infty.
\end{align*}
So we have a contradiction and hence $\eta \in T_\xi^{\Cb} \partial \Omega$. \\

\noindent \textbf{Case 2(b):} The set $\{ \gamma_1, \gamma_2, \dots\}$ is finite. By passing to a subsequence we can suppose that $\gamma_n = \gamma$ in for all $n \in \Nb$. 

Fix some $z_0 \in \Omega$. If the set $\{ \gamma^n(z_0) : n \in \Nb\}$ is relatively compact in $\Omega$, then $\gamma$ has a fixed point in $\Omega$ (see for instance~\cite[Theorem 5.1]{Z2017}). So, since $\Gamma$ acts freely on $\Omega$, the set  $\{ \gamma^n(z_0) : n \in \Nb\}$ must be unbounded in $\Omega$. 

Next consider the functions 
\begin{align*}
b_n(z) = K_\Omega(z,y_n)-K_\Omega(y_n,z_0).
\end{align*}
Since $b_n(z_0)=0$ and each $b_n$ is 1-Lipschitz (with respect to the Kobayashi distance) we can pass to a subsequence such that $b_n \rightarrow b$ locally uniformly. Then
\begin{align*}
b(\gamma^{-1}z) = \lim_{n \rightarrow \infty} K_\Omega(\gamma^{-1} z,y_n)-K_\Omega(y_n,z_0) =  \lim_{n \rightarrow \infty} K_\Omega(z,\gamma y_n)-K_\Omega(y_n,z_0) =b(z)
\end{align*}
since
\begin{align*}
\abs{ K_\Omega(z,\gamma y_n)-K_\Omega(z, y_n)} \leq K_\Omega(y_n, \gamma y_n) \rightarrow 0.
\end{align*}
So 
\begin{align*}
b(\gamma^{-n} z_0) = b(z_0)=0
\end{align*}
for all $n \in \Nb$. 

\begin{observation} For any $t \in \Rb$
\begin{align*}
\overline{b^{-1}\Big((-\infty, t]\Big)}^{\Euc} \cap \partial \Omega \subset T_{\xi}^{\Cb} \partial \Omega.
\end{align*}
\end{observation}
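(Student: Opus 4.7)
The plan is to mimic the analogous observation from the Bergman-distance argument in Case 2(b) of the proof of Theorem~\ref{thm:main}, simply replacing the role of the Bergman distance with the Kobayashi distance and the role of Theorem~\ref{thm:GP} with Theorem~\ref{thm:GP_convex}. Concretely, I would argue by contradiction: suppose $w_m \in b^{-1}((-\infty,t])$ is a sequence with $w_m \to \eta \in \partial \Omega$ and $\eta \notin T_\xi^{\Cb} \partial \Omega$. Since $\eta \in T_\eta^{\Cb} \partial \Omega$ always, the hypothesis $\eta \notin T_\xi^{\Cb} \partial \Omega$ implies $T_\xi^{\Cb} \partial \Omega \neq T_\eta^{\Cb} \partial \Omega$, which is precisely the hypothesis needed to invoke Theorem~\ref{thm:GP_convex}.

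Applying Theorem~\ref{thm:GP_convex} to the pair of boundary points $\xi$ and $\eta$ yields some constant $R > 0$ such that, for all sufficiently large $m$ and $n$,
\begin{align*}
K_\Omega(w_m, z_0) + K_\Omega(z_0, y_n) - K_\Omega(w_m, y_n) \leq R.
\end{align*}
Rearranging and taking the limit $n \to \infty$ (which is legal because the defining sequence $b_n$ converges to $b$ locally uniformly and $w_m$ is a fixed interior point) gives
\begin{align*}
b(w_m) = \lim_{n \to \infty}\bigl( K_\Omega(w_m, y_n) - K_\Omega(z_0, y_n)\bigr) \geq K_\Omega(w_m, z_0) - R.
\end{align*}

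To conclude, I would invoke Proposition~\ref{prop:convex_complete}: since $(\Omega, K_\Omega)$ is a proper metric space and $w_m \to \eta \in \partial \Omega$, we have $K_\Omega(w_m, z_0) \to \infty$. Combined with the previous inequality this forces $b(w_m) \to \infty$, contradicting $b(w_m) \leq t$. Hence $\eta \in T_\xi^{\Cb} \partial \Omega$, proving the observation.

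There is no real obstacle here; the only minor point that needs care is verifying that the hypothesis $T_\xi^{\Cb} \partial \Omega \neq T_\eta^{\Cb} \partial \Omega$ of Theorem~\ref{thm:GP_convex} follows from $\eta \notin T_\xi^{\Cb}\partial\Omega$, which is immediate from $\eta \in T_\eta^{\Cb}\partial\Omega$. Everything else is a verbatim translation of the Bergman-distance observation, using that the Kobayashi analogue of completeness (Proposition~\ref{prop:convex_complete}) and of the horosphere estimate (Theorem~\ref{thm:GP_convex}) are both available in the convex $C^{1,\epsilon}$ setting.
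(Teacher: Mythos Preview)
Your proposal is correct and follows essentially the same argument as the paper's own proof: assume $w_m \to \eta \notin T_\xi^{\Cb}\partial\Omega$, invoke Theorem~\ref{thm:GP_convex} to bound the Gromov product, pass to the limit in $n$ to get $b(w_m) \geq K_\Omega(w_m,z_0)-R$, and use properness of $K_\Omega$ to reach a contradiction. Your explicit remark that $\eta \notin T_\xi^{\Cb}\partial\Omega$ forces $T_\xi^{\Cb}\partial\Omega \neq T_\eta^{\Cb}\partial\Omega$ is a small clarification the paper leaves implicit, but otherwise the two proofs coincide.
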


\begin{proof} Suppose $w_m \in b^{-1}\Big((-\infty, t]\Big)$ and $w_m \rightarrow \eta \in \partial \Omega$. If $\eta \notin T_\xi^{\Cb} \partial \Omega$, then Theorem~\ref{thm:GP_convex} implies that there exists $R >0$ such that 
\begin{align*}
K_\Omega(w_m, z_0) &+ K_\Omega(z_0, y_n) - K_\Omega(w_m, y_n) \leq R.
\end{align*}
Then 
\begin{align*}
b(w_m) = \lim_{n \rightarrow \infty} K_\Omega(w_m, y_n)-K_\Omega(z_0, y_n) \geq K_\Omega(w_m, z_0)-R.
\end{align*}
However $K_\Omega(w_m, z_0) \rightarrow \infty$ since $K_\Omega$ is a proper metric on $\Omega$. So we have a contradiction.
\end{proof}

Using the previous observation, there exists $n_k \rightarrow \infty$ such that
\begin{align*}
\lim_{k \rightarrow \infty} d_{\Euc}\left( \gamma^{-n_k} z_0, T_{\xi}^{\Cb} \partial \Omega \right)=0.
\end{align*}
So $\Lc(\Omega) \cap T_{\xi}^{\Cb} \partial \Omega \neq \emptyset$. 

\end{proof}

\bibliographystyle{alpha}
\bibliography{complex_kob}

\end{document}